\newcommand{\mathbfL}{{\mathchoice{\mbox{\bf\L}}{\mbox{\bf\L}}{\mbox{\bf\scriptsize\L}}{\mbox{\bf\tiny\L}}}}
\newcommand{\lan}{\langle}
\newcommand{\ran}{\rangle}
\newcommand{\mt}{\land}
\newcommand{\jn}{\lor}
\newcommand{\m}[1]{{\bf {#1}}}
\newcommand{\prp}[1]{{\sf #1}}
\newcommand{\f}{\ensuremath{\varphi}}
\newcommand{\ps}{\ensuremath{\psi}}
\newcommand{\x}{\ensuremath{\chi}}
\DeclareMathOperator{\Con}{\mathrm{Con}}
\newcommand{\cg}[1]{{\rm Cg}_{_{#1}}}
\newcommand{\cgq}[1]{{\rm Cg}_{_{#1}}^{\Q}}
\newcommand{\cls}[1]{\mathcal{#1}}
\newcommand{\De}{\Delta}
\newcommand{\Ga}{\Gamma}
\newcommand{\The}{\Theta}
\newcommand{\N}{\mathbb{N}}
\newcommand{\V}{\mathcal{V}}
\newcommand{\Vfsi}{\mathcal{V}_{_\text{FSI}}}
\newcommand{\Vsi}{\mathcal{V}_{_\text{SI}}}
\newcommand{\K}{\mathcal{K}}
\newcommand{\Q}{\mathcal{Q}}
\newcommand{\Qfsi}{\mathcal{Q}_{_\text{FSI}}}
\newcommand{\Qrfsi}{\mathcal{Q}_{_\text{RFSI}}}
\newcommand{\Qsi}{\mathcal{Q}_{_\text{SI}}}
\newcommand{\Qrsi}{\mathcal{Q}_{_\text{RSI}}}
\DeclareMathOperator{\Conq}{\mathrm{Con}_{\Q}}
\theoremstyle{definition}
\newtheorem{Theorem}{Theorem}
\newtheorem{Proposition}[Theorem]{Proposition}
\newtheorem{Lemma}[Theorem]{Lemma}
\newtheorem{Remark}[Theorem]{Remark}
\newtheorem{Corollary}[Theorem]{Corollary}
\numberwithin{Theorem}{section}
\setlist[itemize]{leftmargin=*, itemsep=0.1cm, topsep=0cm}
\setlist[enumerate]{leftmargin=1cm, itemsep=0cm, topsep=0.15cm}
\journal{Elsevier} 
\begin{document}

\begin{frontmatter}


\title{Transfer theorems for finitely subdirectly irreducible algebras}

\author{Wesley Fussner}
\address{Mathematical Institute, University of Bern, Switzerland}
\ead{wesley.fussner@unibe.ch}

\author{George Metcalfe}
\address{Mathematical Institute, University of Bern, Switzerland}
\ead{george.metcalfe@unibe.ch}

\begin{abstract}
We show that under certain conditions, well-studied algebraic properties transfer from the class $\Qrfsi$ of the relatively finitely subdirectly irreducible members of a quasivariety $\Q$ to the whole quasivariety, and, in certain cases, back again. First, we prove that if $\Q$ is relatively congruence-distributive, then it has the $\Q$-congruence extension property ($\Q$-\prp{CEP}) if and only if $\Qrfsi$ has this property. We then prove that if $\Q$ has the $\Q$-\prp{CEP} and $\Qrfsi$ is closed under subalgebras, then $\Q$ has a one-sided amalgamation property (equivalently, for $\Q$, the amalgamation property) if and only if $\Qrfsi$ has this property. We also establish similar results for the transferable injections property and strong amalgamation property. For each property considered, we specialize our results to the case where $\Q$ is a variety --- so that $\Qrfsi$ is the class of finitely subdirectly irreducible members of $\Q$ and the $\Q$-\prp{CEP} is the usual congruence extension property --- and prove that when $\Q$ is finitely generated and congruence-distributive, and $\Qrfsi$ is closed under subalgebras, possession of the property is decidable. Finally, as a case study, we provide a complete description of the subvarieties of a notable variety of BL-algebras that have the amalgamation property.
\end{abstract}

\begin{keyword}
variety \sep
quasivariety \sep 
congruence-distributive  \sep
finitely subdirectly irreducible  \sep
congruence extension property  \sep
amalgamation property \sep BL-algebra
\end{keyword}

\end{frontmatter}


\section{Introduction} \label{s:introduction}

This paper studies a cluster of interrelated properties of general interest in algebra: the congruence extension property, (strong) amalgamation property,  transferable injections property, and surjective epimorphisms property. The latter have each been investigated in a range of algebraic contexts, with studies spanning groups, rings, lattices, Lie algebras, and numerous other algebraic structures (see~\cite{KMPT83} for an extensive survey). In this paper, we adopt a general vantage point and develop a widely applicable toolkit for studying these properties. Our main contribution consists of transfer theorems that characterize each property for a class of algebraic structures satisfying certain hypotheses in terms of a smaller and more easily studied subclass, namely, the (relatively) finitely subdirectly irreducible members of the class. These theorems generalize other transfer results in the literature (found in, e.g.,~\cites{GL71,Dav77,Kis85b,CP99,MMT14}), while simultaneously providing a more uniform treatment. 

We freely make use of basic notions of universal algebra throughout our discussion, but first recall several key definitions, referring to~\cites{BS81,Gor98} for more detailed treatments. Let $\Q$ be a \emph{quasivariety}: a class of similar algebras (i.e., a class of algebras of the same signature) defined by quasiequations, or, equivalently, closed under isomorphisms, subalgebras, direct products, and ultraproducts. A congruence $\The$ of an algebra $\m{A}\in\Q$  is called a \emph{$\Q$-congruence} if $\m{A}/\The\in\Q$. When ordered by inclusion, the set of $\Q$-congruences of $\m{A}$ forms an algebraic lattice $\Conq\m{A}$. Arbitrary meets coincide in $\Conq\m{A}$ with those taken in the algebraic lattice $\Con\m{A}$ of all congruences of $\m{A}$, as do joins of chains, but this may not be the case for arbitrary joins. If $\Q$ is a {\em variety} --- that is, a class of similar algebras defined by equations, or, equivalently, closed under homomorphic images, subalgebras, and direct products --- then every congruence of $\m{A}$ is a $\Q$-congruence and $\Conq\m{A}$ and $\Con\m{A}$ coincide. 

An algebra $\m{A}$ is said to be \emph{(finitely) subdirectly irreducible} if whenever $\m{A}$ is isomorphic to a subdirect product of a (non-empty finite) set of algebras, it is isomorphic to one of these algebras. Equivalently, $\m{A}$ is finitely subdirectly irreducible if the least congruence $\Delta_A := \{\lan a,a\ran\mid a\in A\}$ is meet-irreducible in $\Con\m{A}$, and subdirectly irreducible if $\Delta_A$ is completely meet-irreducible in $\Con\m{A}$.\footnote{An element $a$ of a lattice $\m{L}$ is {\em meet-irreducible} if $a=b\mt c$ implies $a=b$ or $a=c$, and this is true of any greatest element $\top$ of $\m{L}$; however, $a$ is {\em completely meet-irreducible} if $a=\bigwedge B$ implies $a\in B$ for any $B\subseteq L$, which is not the case for $\top=\bigwedge\emptyset$.} When ${\m A}$ belongs to $\Q$, it is convenient to relativize these notions. In this case, ${\m A}$ is said to be \emph{(finitely) $\Q$-subdirectly irreducible} if whenever $\m{A}$ is isomorphic to a subdirect product of a (non-empty finite) set of algebras in $\Q$, it is isomorphic to one of them. Equivalently, $\m{A}$ is finitely $\Q$-subdirectly irreducible if and only if $\Delta_A$ is meet-irreducible in $\Conq\m{A}$, and $\Q$-subdirectly irreducible if and only if $\Delta_A$ is completely meet-irreducible in $\Conq\m{A}$.

For any $\m{A}\in\Q$ and $\The\in\Conq\m{A}$, it follows from the correspondence theorem for universal algebra that the quotient algebra $\m{A}/\The\in\Q$ is finitely $\Q$-subdirectly irreducible if and only if $\The$ is meet-irreducible in $\Conq\m{A}$, and $\Q$-subdirectly irreducible if and only if it is completely meet-irreducible. Clearly, if $\Q$ is a variety, the properties of being (finitely) $\Q$-subdirectly irreducible and (finitely) subdirectly irreducible  coincide. When $\Q$ is clear from the context, we call a (finitely) $\Q$-subdirectly irreducible algebra $\m{A}$ \emph{relatively (finitely) subdirectly irreducible}.

Let $\Qfsi$, $\Qsi$, $\Qrfsi$, and $\Qrsi$ denote the classes of finitely subdirectly irreducible, subdirectly irreducible, relatively finitely subdirectly irreducible, and relatively subdirectly irreducible members of $\Q$, respectively. A sizeable number of results in the universal algebra literature state that, under certain conditions, well-studied algebraic properties transfer from $\Qrsi$ to $\Q$, at least when $\Q$ is a variety (see, e.g.,~\cites{GL71,Dav77,Kis85b,CP99,MMT14}). The aim of this paper is to determine conditions under which these properties transfer from $\Qrfsi$ to $\Q$ and, in some cases, back again. A key motivation for considering $\Qrfsi$ rather than $\Qrsi$ is that it is often easier to state or check conditions for the larger class. Notably, if $\Q$ has equationally definable relative principal congruence meets (a common property for quasivarieties corresponding to non-classical logics), then $\Qrfsi$ is a universal class~\cite{CD90}*{Theorem~2.3}. Moreover, if $\V$ is a variety such that $\Vfsi$ is a universal class, then $\Vfsi$ is a positive universal class if and only if $\Con{\m{A}}$ is a chain (totally ordered set) for each $\m{A}\in\Vfsi$.\footnote{Suppose that $\Vfsi$ is a universal class. Then it is a positive universal class if and only if for any $\m{A}\in\Vfsi$ and $\The\in\Con\m{A}$, also $\m{A}/\The\in\Vfsi$, i.e., $\The$ is meet-irreducible in $\Con\m{A}$. But a lattice is a chain if and only if all its elements are meet-irreducible, so  $\Vfsi$ is a positive universal class if and only if $\Con\m{A}$ is a chain for each $\m{A}\in\Vfsi$.}

An algebra $\m{B}\in\Q$ is said to be \emph{$\Q$-congruence-distributive} if $\Conq\m{B}$ is a distributive lattice. If every member of $\Q$ is $\Q$-congruence-distributive, then $\Q$ is said to be \emph{relatively congruence-distributive} and in this case, as shown in~\cite{Dz89}*{Theorem 2.3}, $\Qrfsi=\Qfsi$. Clearly, a variety is relatively congruence-distributive if and only if it is congruence-distributive in the usual sense.

An algebra $\m{B}\in\Q$ is said to have the {\em $\Q$-congruence extension property} (for short, $\Q$-\prp{CEP}) if for any subalgebra $\m{A}$ of $\m{B}$ and $\The\in\Conq\m{A}$, there exists a $\Phi\in\Conq\m{B}$ such that $\Phi\cap A^2=\The$. A class $\K$ of algebras in $\Q$ is said to have the $\Q$-\prp{CEP} if every member of $\K$ has the $\Q$-\prp{CEP}. In Section~\ref{s:cep}, we prove that if $\Q$ is relatively congruence-distributive, it has the $\Q$-\prp{CEP} if and only if $\Qrfsi=\Qfsi$ has the $\Q$-\prp{CEP} (Theorem~\ref{t:CEPmain}). When $\Q$ is a variety, the $\Q$-\prp{CEP} is the usual congruence extension property (for short, \prp{CEP}) and if it is congruence-distributive, $\Q$ has the \prp{CEP} if and only if $\Qfsi$ has the \prp{CEP} (Corollary~\ref{c:CEPvar}). This result yields also the known fact that a congruence-distributive variety $\V$ such that $\Vsi$ is elementary has the \prp{CEP} if and only if $\Vsi$ has the \prp{CEP}~\cite{Dav77}*{Theorem 3.3}. Note, however, that the requirement that $\Vsi$ is elementary may fail or be difficult to establish, and this result may therefore be significantly harder to apply than Theorem~\ref{t:CEPmain}.

When $\Qfsi$ is closed under subalgebras, Theorem~\ref{t:CEPmain} can be reformulated in terms of commutative diagrams. Let $\K$ be any class of similar algebras. A \emph{span} in $\K$ is a 5-tuple $\lan\m{A},\m{B},\m{C},\f_B,\f_C \ran$ consisting of $\m{A},\m{B},\m{C}\in \K$ and homomorphisms $\f_B\colon\m{A}\to\m{B}$, $\f_C\colon\m{A}\to\m{C}$. We call this span  {\em injective} if $\f_B$ is an embedding, {\em doubly injective} if both $\f_B$ and $\f_C$ are embeddings, and {\em injective-surjective} if $\f_B$ is an embedding and $\f_C$ is surjective. The class $\K$ has the {\em extension property}  (for short, \prp{EP}) if for any injective-surjective span $\lan\m{A},\m{B},\m{C},\f_B,\f_C \ran$ in $\K$, there exist a $\m{D}\in\K$, a homomorphism $\ps_B\colon\m{B}\to\m{D}$, and an embedding $\ps_C\colon\m{C}\to\m{D}$ such that $\ps_B \f_B=\ps_C\f_C$, that is, the diagram in Figure~\ref{f:properties}(i) is commutative. We prove that $\Q$ has the $\Q$-\prp{CEP} if and only if it has the \prp{EP} (Corollary~\ref{c:rcep iff ep}, generalizing~\cite{Bac72}*{Lemma~1.2}) and that if $\Q$ is relatively congruence-distributive and $\Qrfsi=\Qfsi$ is closed under subalgebras, then the \prp{EP} and $\Q$-\prp{CEP} for $\Q$ and $\Qrfsi=\Qfsi$ all coincide (Theorem~\ref{t:sepcep}).

\begin{figure}[t]
\centering
\begin{scriptsize}
$\begin{array}{ccccccc}
\begin{tikzcd}
& \m C \ar[dashed, hook, rd, "\ps_C"] &       \\
\m{A} \ar[->>, ru, "\f_C"] \ar[hook, rd,"\f_B"'] &      & \m D  \\
 & \m{B} \ar[dashed, ru, "\ps_B"'] &     
\end{tikzcd}
&&
\begin{tikzcd}
& \m C \ar[dashed, hook, rd, "\ps_C"] &       \\
\m{A} \ar[hook, ru, "\f_C"] \ar[hook, rd,"\f_B"'] &      & \m D  \\
 & \m{B} \ar[dashed, hook, ru, "\ps_B"'] &     
\end{tikzcd}
& &
\begin{tikzcd}
& \m C \ar[dashed, hook, rd, "\ps_C"] &       \\
\m{A} \ar[hook, ru, "\f_C"] \ar[hook, rd,"\f_B"'] &      & \m D  \\
 & \m{B} \ar[dashed,  ru, "\ps_B"'] &     
\end{tikzcd}
& &
\begin{tikzcd}
& \m C \ar[dashed, hook, rd, "\ps_C"] &       \\
\m{A} \ar[ ru, "\f_C"] \ar[hook, rd,"\f_B"'] &      & \m D  \\
 & \m{B} \ar[dashed,  ru, "\ps_B"'] &     
\end{tikzcd}\\[.5in]
\text{(i)} \enspace \prp{EP}  \quad& & \text{(ii)} \enspace\prp{AP} \quad & & \text{(iii)} \enspace\prp{1AP} \quad && \text{(iv)} \enspace\prp{TIP} \quad
\end{array}$
\end{scriptsize}
\caption{Commutative diagrams for algebraic properties}
\label{f:properties}
\end{figure}
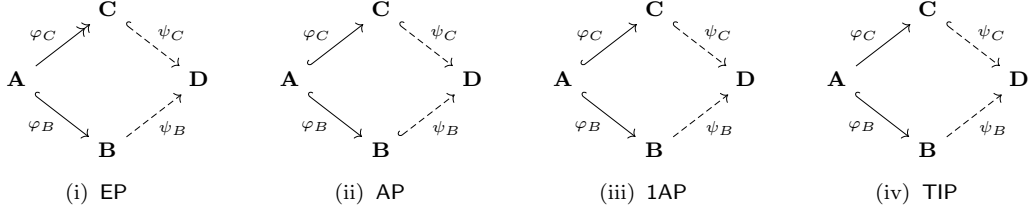

Now let $\K$ and $\K'$ be two classes of algebras of the same signature.  An {\em amalgam in }$\K'$ of a doubly injective span $\lan\m{A},\m{B},\m{C},\f_B,\f_C \ran$ in $\K$ is a triple $\lan \m{D},\ps_B,\ps_C\ran$ where $\m{D}\in\K'$ and $\ps_B,\ps_C$ are embeddings of $\m{B}$ and $\m{C}$ into $\m{D}$, respectively, such that $\ps_B\f_B = \ps_C\f_C$ (see Figure~\ref{f:properties}(ii)). The class $\K$ has the {\em amalgamation property}  (for short, \prp{AP})  if every doubly injective span in $\K$ has an amalgam in $\K$. We also say that $\K$  has the {\em one-sided amalgamation property} (for short, \prp{1AP})  if for any doubly injective span $\lan\m{A},\m{B},\m{C},\f_B,\f_C \ran$ in $\K$, there exist a $\m{D}\in\K$,  a homomorphism $\ps_B\colon\m{B}\to\m{D}$, and an embedding $\ps_C\colon\m{C}\to\m{D}$ such that $\ps_B \f_B=\ps_C\f_C$  (see Figure~\ref{f:properties}(iii)). It is easy to see using~\cite{GL71}*{Lemma~2} that a variety $\V$ has the~\prp{1AP} if and only if it has the~\prp{AP}, but this is not always the case for other classes, in particular, $\Vfsi$. In Section~\ref{s:ap}, we prove that when $\Q$ has the $\Q$-\prp{CEP} and $\Qrfsi$ is closed under subalgebras, $\Q$ has the~\prp{1AP} (equivalently, the~\prp{AP}) if and only if $\Qrfsi$ has the~\prp{1AP} (Theorem~\ref{t:APmain}). 

In Section~\ref{s:further}, we consider consequences of our results for three further properties. First, a class $\K$ of similar algebras is said to have the \emph{transferable injections property} (for short, \prp{TIP}) if for any injective span $\lan\m{A},\m{B},\m{C},\f_B,\f_C \ran$ in $\K$, there exist a $\m{D}\in \K$, a homomorphism $\ps_B\colon\m{B}\to\m{D}$, and an embedding $\ps_C\colon\m{C}\to\m{D}$ such that $\ps_B \f_B=\ps_C\f_C$ (see Figure~\ref{f:properties}(iv)). It is well-known that a variety has the \prp{TIP} if and only if it has the \prp{CEP} and \prp{AP} (\cite{Bac72}*{Lemma~1.7}); more generally, as we show here, a class of similar algebras that is closed under subalgebras has the \prp{TIP} if and only if it has the \prp{EP} and \prp{1AP}. It then follows from our previous results that when $\Q$ is relatively congruence-distributive and $\Qrfsi=\Qfsi$ is closed under subalgebras, $\Q$ has the \prp{TIP} if and only if $\Qrfsi=\Qfsi$ has the \prp{TIP} (Theorem~\ref{t:mainTIP}). 

Next, let $\K$ be a class of similar algebras and $\m{A},\m{B}\in\K$. A homomorphism $\varphi\colon\m{A}\to\m{B}$ is an \emph{epimorphism in $\K$} if for all $\m{C}\in\K$ and all homomorphisms $\ps_1,\ps_2\colon\m{B}\to\m{C}$, if $\ps_1\varphi=\ps_2\varphi$, then $\ps_1=\ps_2$. Surjective homomorphisms are always epimorphisms, but the converse does not hold in general. If all epimorphisms in $\K$ are surjections, $\K$ is said to have \emph{surjective epimorphisms} (for short, \prp{SE}). In~\cite{Cam18}*{Theorem~22}, it was proved that an arithmetical (i.e., congruence-distributive and congruence-permutable) variety $\V$ such that $\Vfsi$ is a universal class has \prp{SE} if and only if $\Vfsi$ has \prp{SE}. 

For a class of algebras $\K'$ of the same signature as $\K$, an amalgam $\lan \m{D},\ps_B,\ps_C\ran$ in $\K'$ of a doubly injective span $\lan\m{A},\m{B},\m{C},\f_B,\f_C \ran$ in $\K$ is called {\em strong} if  $\ps_B\f_B[A] = \ps_B[B]\cap\ps_C[C]$. The class $\K$ is said to have the \emph{strong amalgamation property} (for short, \prp{SAP}) if every doubly injective span in $\K$ has a strong amalgam in $\K$. Using the fact that a quasivariety has the  \prp{SAP} if and only if it has \prp{SE} and the \prp{AP}~\cite{Isb1966}, it follows from our previous results and~\cite{Cam18}*{Theorem~22} that an arithmetical variety $\V$ with the \prp{CEP} such that $\Vfsi$ is a universal class has the \prp{SAP} if and only if $\Vfsi$ has \prp{SE} and the \prp{1AP} (Corollary~\ref{c:mainsa}). We also show that such a variety has the \prp{SAP} if every doubly injective span in $\Vfsi$ has a strong amalgam in $\V$ (Theorem~\ref{t:mainsa}).

In Section~\ref{s:decidability}, we conclude that possession of all the properties mentioned above is decidable for certain finitely generated varieties. More precisely, we obtain effective algorithms to decide if a congruence-distributive variety $\V$ that is finitely generated by a given finite set of finite algebras, such that $\Vfsi$ is closed under subalgebras, has the \prp{CEP},  \prp{AP}, or \prp{TIP} (Theorem~\ref{t:decidability}). In the case where $\V$ is arithmetical, we obtain also effective algorithms to decide if $\V$  has \prp{SE} or the \prp{SAP}. Finally, in Section~\ref{s:BLalgebras}, we provide a complete description of the subvarieties of a notable variety of BL-algebras (those generated by a class of ``one-component'' totally ordered BL-algebras) that have the \prp{AP}.


\section{The Congruence Extension Property} \label{s:cep}

We first recall some basic facts about extending congruences, denoting the $\Q$-congruence of an algebra $\m{A}\in\Q$ generated by a set $R\subseteq A^2$ by $\cgq{\m{A}}(R)$.

\begin{Lemma}\label{l:cepbasic}
Let $\Q$ be any quasivariety and let $\m{B}\in\Q$.
\begin{enumerate}[label=(\alph*)]
\item (cf.~\cite{Kis85b}*{Lemma~1.3}) 
Suppose that for any subalgebra $\m{A}$ of $\m{B}$ and completely meet-irreducible $\The\in\Conq\m{A}$, there exists a $\Psi\in\Conq\m{B}$ with $\Psi\cap A^2=\The$. Then $\m{B}$ has the $\Q$-\prp{CEP}.
\item (cf.~\cite{BP97}*{p.~392}) 
Let $\m{A}$ be a subalgebra of $\m{B}$ and $\The\in\Conq\m{A}$ such that $\Psi\cap A^2=\The$ for some $\Psi\in\Conq\m{B}$. Then $\cgq{\m{B}}(\The)\cap A^2 = \The$.
\end{enumerate}
\end{Lemma}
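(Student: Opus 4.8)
The plan is to treat the two parts separately, handling the shorter part (b) first and then leveraging the algebraic-lattice structure of $\Conq\m{A}$ for part (a).

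For part (b) I would argue by double inclusion. Since $\The\subseteq A^2$ and $\The$ is contained in any $\Q$-congruence of $\m{B}$ that it generates, we immediately get $\The\subseteq\cgq{\m{B}}(\The)\cap A^2$. For the reverse inclusion, the hypothesis supplies a $\Psi\in\Conq\m{B}$ with $\Psi\cap A^2=\The$; in particular $\The\subseteq\Psi$, so by minimality of the generated $\Q$-congruence we have $\cgq{\m{B}}(\The)\subseteq\Psi$. Intersecting with $A^2$ then yields $\cgq{\m{B}}(\The)\cap A^2\subseteq\Psi\cap A^2=\The$, and the two inclusions give the claimed equality.

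For part (a), fix a subalgebra $\m{A}$ of $\m{B}$ and an arbitrary $\The\in\Conq\m{A}$; I must produce some $\Phi\in\Conq\m{B}$ with $\Phi\cap A^2=\The$. The key structural fact is that $\Conq\m{A}$ is an algebraic lattice, so $\The$ can be written as a meet $\The=\bigwedge_{i\in I}\The_i$ of completely meet-irreducible elements $\The_i\in\Conq\m{A}$ (namely those lying above $\The$), with the convention that the top element corresponds to the empty meet. Applying the hypothesis to each $\The_i$ furnishes a $\Psi_i\in\Conq\m{B}$ with $\Psi_i\cap A^2=\The_i$, and I would then set $\Phi:=\bigwedge_{i\in I}\Psi_i$. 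Since arbitrary meets in $\Conq\m{B}$ coincide with intersections, as recalled in the introduction, it follows that $\Phi\in\Conq\m{B}$ and $\Phi\cap A^2=\bigcap_{i\in I}(\Psi_i\cap A^2)=\bigcap_{i\in I}\The_i=\The$, where the last equality again uses that meets in $\Conq\m{A}$ are intersections. This exhibits the required extension and shows that $\m{B}$ has the $\Q$-\prp{CEP}.

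The only genuinely non-routine ingredient is the representation of an arbitrary $\Q$-congruence as a meet of completely meet-irreducible ones; this is precisely where algebraicity of $\Conq\m{A}$ is essential, since in a general complete lattice such a representation may fail, and it is what reduces the general congruence-extension problem to the completely meet-irreducible case assumed in the hypothesis. Everything else rests on the fact that meets in $\Conq$ are set-theoretic intersections, so that intersection with $A^2$ commutes with arbitrary meets.
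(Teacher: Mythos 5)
Your proof is correct and follows essentially the same route as the paper: part (b) by the two inclusions $\The\subseteq\cgq{\m{B}}(\The)\cap A^2\subseteq\Psi\cap A^2=\The$, and part (a) by decomposing $\The$ as an intersection of completely meet-irreducible $\Q$-congruences (using algebraicity of $\Conq\m{A}$), extending each, and intersecting the extensions. No gaps.
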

\begin{proof}
(a) Consider any subalgebra $\m{A}$ of $\m{B}$ and $\The\in\Conq\m{A}$. Since $\Conq\m{A}$ is an algebraic lattice, there exists a set $\{\The_i\}_{i\in I}$ of $\Q$-congruences of $\m{A}$ such that $\The=\bigcap_{i\in I}\The_i$ and $\The_i$ is  completely meet-irreducible for each $i\in I$ (see, e.g.,~\cite{Gor98}*{Lemma~1.3.2}). By assumption, there exists for each $i\in I$, a $\Psi_i\in\Conq\m{B}$ with $\Psi_i\cap A^2=\The_i$. Hence $\Psi:=\bigcap_{i\in I}\Psi_i\in\Conq\m{B}$ and $\Psi\cap A^2=\bigcap_{i\in I}(\Psi_i\cap A^2)=\bigcap_{i\in I}\The_i=\The$.  

(b) Since $\The\subseteq\Psi$ and $\Psi\in\Conq\m{B}$, also $\cgq{\m{B}}(\The)\subseteq\Psi$. Hence, using the assumption, $\The\subseteq\cgq{\m{B}}(\The)\cap A^2 \subseteq\Psi\cap A^2=\The$.
\end{proof}

We also make use of the following consequence of the correspondence theorem of universal algebra.

\begin{Lemma}[cf.~\cite{BP97}*{Lemma~2}]\label{l:corresp}
Let $\Q$ be any quasivariety and let $\m{A},\m{B}\in\Q$. For any surjective homomorphism $\f\colon\m{A}\to\m{B}$ and $R\subseteq A\times A$,
\[
\f^{-1}[\cgq{\m{B}}(\f[R])]=\cgq{\m{A}}(R)\jn\ker(\f),
\]
where the join on the right hand side is taken in $\Conq\m{A}$ and $\f[R]$ abbreviates $\{\lan\f(x),\f(y)\ran\mid\lan x,y\ran\in R\}$.
\end{Lemma}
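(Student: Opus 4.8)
The plan is to reduce everything to the correspondence theorem for $\Q$-congruences applied to the surjection $\f$. Writing $K := \ker(\f)$, the first isomorphism theorem gives $\m{A}/K \cong \m{B}$, and the correspondence theorem then supplies an order-isomorphism between $\Conq\m{B}$ and the principal filter $\{\The \in \Conq\m{A} \mid K \subseteq \The\}$, implemented by $\Phi \mapsto \f^{-1}[\Phi]$ with inverse $\The \mapsto \f[\The]$. First I would record the two facts this yields. (i) The preimage $\f^{-1}[\Phi]$ of any $\Phi \in \Conq\m{B}$ is a $\Q$-congruence of $\m{A}$ containing $K$, since $\m{A}/\f^{-1}[\Phi] \cong \m{B}/\Phi \in \Q$. (ii) For any $\The \in \Conq\m{A}$ with $K \subseteq \The$, the image $\f[\The]$ is a $\Q$-congruence of $\m{B}$ and $\f^{-1}[\f[\The]] = \The$; this last identity is the elementary observation that, because $\The \supseteq K$, any pair sent by $\f$ into $\f[\The]$ already lies in $\The$ by transitivity through $K$.

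With these in hand I would prove the two inclusions. For $\supseteq$, note that $\f^{-1}[\cgq{\m{B}}(\f[R])]$ is a $\Q$-congruence of $\m{A}$ by (i), and it contains both $R$ (because $\f[R] \subseteq \cgq{\m{B}}(\f[R])$) and $K$; since $\cgq{\m{A}}(R) \jn K$ is the least $\Q$-congruence of $\m{A}$ containing $R \cup K$, it is therefore contained in $\f^{-1}[\cgq{\m{B}}(\f[R])]$. For $\subseteq$, set $\The := \cgq{\m{A}}(R) \jn K$, a $\Q$-congruence containing $K$. By (ii), $\f[\The]$ is a $\Q$-congruence of $\m{B}$, and it contains $\f[R]$ because $R \subseteq \The$; hence $\cgq{\m{B}}(\f[R]) \subseteq \f[\The]$ by minimality. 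Taking preimages and invoking $\f^{-1}[\f[\The]] = \The$ gives $\f^{-1}[\cgq{\m{B}}(\f[R])] \subseteq \The$, which is the desired inclusion.

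The routine content is merely the bookkeeping of images and preimages of binary relations. The one point demanding care---and precisely the reason the join in the statement is taken in $\Conq\m{A}$ rather than in $\Con\m{A}$---is that every congruence appearing in the argument must be checked to be a genuine $\Q$-congruence, and each minimality step must be read relative to $\Conq$. Thus the only real obstacle is establishing facts (i) and (ii), i.e. that the correspondence isomorphism restricts correctly to $\Q$-congruences above $K$; once these are in place the two inclusions follow formally as above.
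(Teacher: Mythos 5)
Your proof is correct and follows essentially the same route as the paper: both directions are obtained exactly as in the paper's proof, with the $\supseteq$ inclusion coming from the fact that $\f^{-1}[\cgq{\m{B}}(\f[R])]$ is a $\Q$-congruence containing $R$ and $\ker(\f)$, and the $\subseteq$ inclusion from the correspondence theorem applied to $\cgq{\m{A}}(R)\jn\ker(\f)\in[\ker(\f),A^2]$. The only difference is that you spell out the content of the correspondence theorem (your facts (i) and (ii)) where the paper invokes it as a black box.
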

\begin{proof}
Let $\The:=\f^{-1}[\cgq{\m{B}}(\f[R])]$. Since $R\subseteq\The$ and $\The\in\Conq(\m{A})$, also  $\cgq{\m{A}}(R)\subseteq\The$. Moreover, $\ker(\f)\subseteq\The$, so $\cgq{\m{A}}(R)\jn\ker(\f)\subseteq\The$. For the converse inclusion, since $\cgq{\m{A}}(R)\jn\ker(\f)\in [\ker(\f),A^2]$, it follows using the correspondence theorem that
\[
\f^{-1}[\cgq{\m{B}}(\f[R])]\subseteq\f^{-1}[\cgq{\m{B}}(\f[\cgq{\m{A}}(R)\jn\ker(\f)])]=\cgq{\m{A}}(R)\jn\ker(\f).  \qedhere
\]
\end{proof}

We now establish the first main result of this section, recalling from the introduction that if $\Q$ is a relatively congruence-distributive quasivariety, then $\Qrfsi=\Qfsi$~\cite{Dz89}*{Theorem 2.3}.

\begin{Theorem}\label{t:CEPmain}
Let $\Q$ be any relatively congruence-distributive quasivariety.  Then $\Q$ has the $\Q$-congruence extension property if and only if $\Qrfsi=\Qfsi$ has the $\Q$-congruence extension property. 
\end{Theorem}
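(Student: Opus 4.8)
The plan is to prove the two directions separately, with the forward direction being essentially trivial and the backward direction carrying all the weight. For the forward direction, suppose $\Q$ has the $\Q$-\prp{CEP}. Since $\Qrfsi = \Qfsi$ is a subclass of $\Q$, and every member of $\Qrfsi$ is itself an algebra in $\Q$, the $\Q$-\prp{CEP} applied to these algebras immediately shows that $\Qrfsi$ has the $\Q$-\prp{CEP}. (One should note that the $\Q$-\prp{CEP} is defined for an individual algebra $\m{B} \in \Q$ in terms of its subalgebras, which are automatically in $\Q$, so membership in the smaller class changes nothing.)

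For the backward direction, assume $\Qrfsi = \Qfsi$ has the $\Q$-\prp{CEP}; I want to show an arbitrary $\m{B} \in \Q$ has the $\Q$-\prp{CEP}. The key reduction is Lemma~\ref{l:cepbasic}(a): it suffices to show that for any subalgebra $\m{A}$ of $\m{B}$ and any completely meet-irreducible $\The \in \Conq\m{A}$, there is some $\Psi \in \Conq\m{B}$ with $\Psi \cap A^2 = \The$. So fix such an $\m{A}$ and such a $\The$. Because $\The$ is completely meet-irreducible in $\Conq\m{A}$, the quotient $\m{A}/\The$ lies in $\Qrsi \subseteq \Qrfsi = \Qfsi$; in particular it is a relatively finitely subdirectly irreducible algebra, and hence one to which the hypothesized $\Q$-\prp{CEP} for $\Qrfsi$ applies.

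The natural strategy is to pass to the quotient $\m{B}/\The'$, where $\The' := \cgq{\m{B}}(\The)$ is the $\Q$-congruence of $\m{B}$ generated by $\The$, and exploit relative congruence-distributivity there. The candidate extension should be built by lifting an appropriate congruence from $\m{A}/(\The' \cap A^2)$ — which embeds into $\m{B}/\The'$ — using the $\Q$-\prp{CEP} available for the relatively finitely subdirectly irreducible quotient. Concretely, I would consider the image $\m{A}^* := \m{A}/(\The' \cap A^2)$ as a subalgebra of $\m{B}^* := \m{B}/\The'$; the congruence $\The$ descends to a congruence on $\m{A}^*$, and because $\m{A}^*/(\text{that congruence}) \cong \m{A}/\The$ is relatively finitely subdirectly irreducible, I can apply the $\Q$-\prp{CEP} for $\Qfsi$ to extend it across $\m{B}^*$. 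Pulling the result back along the canonical surjection $\m{B} \to \m{B}^*$, and using Lemma~\ref{l:corresp} to control how generated congruences interact with the kernel and with intersections against $A^2$, should produce the desired $\Psi$ on $\m{B}$.

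The main obstacle is exactly the interaction of this construction with the relative setting: joins in $\Conq$ need not agree with joins in $\Con$, so generated $\Q$-congruences and their restrictions to $A^2$ must be handled carefully, and this is where relative congruence-distributivity must be used. I expect distributivity to enter when verifying that the restriction of the lifted congruence meets $A^2$ in exactly $\The$ rather than something larger — the inclusion $\The \subseteq \Psi \cap A^2$ is cheap (Lemma~\ref{l:cepbasic}(b) gives the template), but the reverse inclusion is the delicate point. The distributive law in $\Conq\m{B}$ (equivalently, in the interval above $\The'$) should let me split a congruence that a priori sits above both $\The'$ and the relevant lifted congruence into controllable pieces, forcing its meet with $A^2$ back down to $\The$. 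Pinning down this decomposition cleanly, so that complete meet-irreducibility of $\The$ is genuinely used and the restriction computes correctly, is the step I would budget the most care for.
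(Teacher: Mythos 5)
Your forward direction is fine, but the backward direction has a genuine gap, and it sits exactly where you located the difficulty. The hypothesis ``$\Qrfsi$ has the $\Q$-\prp{CEP}'' is a statement about algebras of $\Qrfsi$ in the role of the \emph{ambient} algebra $\m{B}$ in the definition of the $\Q$-\prp{CEP}; it says nothing unless the algebra you are extending congruences \emph{into} is relatively finitely subdirectly irreducible. Your construction passes to $\m{B}^* = \m{B}/\cgq{\m{B}}(\The)$, but this quotient has no reason whatsoever to lie in $\Qfsi$, so the hypothesis simply cannot be applied to it. The observation that $\m{A}/\The$ is relatively (finitely) subdirectly irreducible is true but irrelevant: the subalgebra side of the $\Q$-\prp{CEP} is unconstrained, and knowing the quotient of the \emph{small} algebra is in $\Qrfsi$ does not license an application of the hypothesis. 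There is also a circularity in the concrete step: since $\The \subseteq \cgq{\m{B}}(\The) \cap A^2$, the congruence that $\The$ ``descends to'' on $\m{A}^* = \m{A}/(\cgq{\m{B}}(\The)\cap A^2)$ is the identity congruence, and the claim $\m{A}^*/(\text{that congruence}) \cong \m{A}/\The$ is \emph{equivalent} to $\cgq{\m{B}}(\The)\cap A^2 = \The$ --- which is precisely the statement you are trying to prove.

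The missing idea is a mechanism for manufacturing a quotient of $\m{B}$ that \emph{is} in $\Qrfsi$ and still witnesses the failure. The paper's proof does this by contradiction: assuming some $\lan a,b\ran \in (\cgq{\m{B}}(\The)\cap A^2)\setminus\The$, it applies Zorn's Lemma to the set $T = \{\Psi\in\Conq\m{B} \mid \lan a,b\ran\notin(\Psi\cap A^2)\jn\The\}$ to obtain a maximal element $\Psi^*$, and then uses relative congruence-distributivity --- via the identity $((\Psi_1\cap A^2)\jn\The)\cap((\Psi_2\cap A^2)\jn\The) = (\Psi_1\cap\Psi_2\cap A^2)\jn\The$ in $\Conq\m{A}$ --- to show that $\Psi^*$ is meet-irreducible, so that $\m{B}/\Psi^*\in\Qrfsi$. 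Only then is the hypothesis applied, to the ambient algebra $\m{B}/\Psi^*$ with subalgebra $\m{A}/(\Psi^*\cap A^2)$, and Lemmas~\ref{l:corresp} and~\ref{l:cepbasic}(b) convert the membership $\lan a,b\ran\in\cgq{\m{B}}((\Psi^*\cap A^2)\jn\The)\jn\Psi^*$ into a contradiction. So distributivity enters not in ``computing the restriction'' but in proving meet-irreducibility of the maximal separating congruence; without that step (or an equivalent one), your argument never reaches an algebra to which the assumed $\Q$-\prp{CEP} for $\Qfsi$ applies. Note also that the reduction via Lemma~\ref{l:cepbasic}(a) to completely meet-irreducible $\The$, while harmless, is not used by the paper here and does not by itself supply the needed FSI quotient of $\m{B}$.
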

\begin{proof}
Suppose for the non-trivial direction that $\Qrfsi$ has the $\Q$-\prp{CEP}. Let  $\m{A}$ be a subalgebra of  some $\m{B}\in\Q$ and let $\The\in\Conq\m{A}$. We assume towards a contradiction that $\cgq{\m{B}}(\The)\cap A^2 \neq \The$; that is, there exists an ordered pair $\lan a,b\ran\in\cgq{\m{B}}(\The)\cap A^2$ satisfying $\lan a,b\ran\not\in\The$. Define
\[
T:=\{\Psi\in\Conq\m{B}\mid\lan a,b\ran\not\in(\Psi\cap A^2)\jn\The\}.
\]
Then $\Delta_B\in T$, so $T\neq\emptyset$. Moreover, every chain in $\lan T,\subseteq\ran$ has an upper bound (its union) in $T$, so, by Zorn's Lemma, $\lan T,\subseteq\ran$ has a maximal element~$\Psi^*$.

\medskip\noindent
{\bf Claim.} $\Psi^*$ is meet-irreducible in $\Conq\m{B}$ and hence $\m{B}/\Psi^*\in\Qrfsi$.

\medskip\noindent
{\em Proof of claim.} Suppose that $\Psi^*=\Psi_1\cap\Psi_2$ for some $\Psi_1,\Psi_2\in\Conq\m{B}$. Then, since $\Conq\m{A}$ is distributive by assumption,
\[
((\Psi_1\cap A^2)\jn\The)\cap((\Psi_2\cap A^2)\jn\The) = (\Psi_1\cap \Psi_2\cap A^2)\jn\The = (\Psi^*\cap A^2)\jn\The.
\]
But $\lan a,b\ran\not\in(\Psi^*\cap A^2)\jn\The$, so $\lan a,b\ran\not\in(\Psi_1\cap A^2)\jn\The$ or $\lan a,b\ran\not\in(\Psi_2\cap A^2)\jn\The$. Hence $\Psi_1\in T$ or $\Psi_2\in T$ and, by the maximality of $\Psi^*$ in $\lan T,\subseteq\ran$, either $\Psi^*=\Psi_1$ or $\Psi^*=\Psi_2$. So  $\Psi^*$ is meet-irreducible.
\qed

\medskip\noindent
Observe next that $\m{A}/(\Psi^*\cap A^2)$ embeds into $\m{B}/\Psi^*$ and can be identified with a subalgebra of  $\m{B}/\Psi^*$ with universe $A/\Psi^*$. We consider the congruence $((\Psi^*\cap A^2)\jn\The) / (\Psi^*\cap A^2)$ of $\m{A}/(\Psi^*\cap A^2)$. By the second isomorphism theorem of universal algebra,
\[
(\m{A}/(\Psi^*\cap A^2))/((\Psi^*\cap A^2)\jn\The) / (\Psi^*\cap A^2))\cong\m{A}/((\Psi^*\cap A^2)\jn\The)\in\Q,
\]
so, in particular, $(\Psi^*\cap A^2)\jn\The) / (\Psi^*\cap A^2)\in\Conq \m{A}/(\Psi^*\cap A^2)$. Since $\lan a,b\ran\not\in(\Psi^*\cap A^2)\jn\The$, also
\[
\lan [a]_{\Psi^*\cap A^2},[b]_{\Psi^*\cap A^2}\ran\not\in ((\Psi^*\cap A^2)\jn\The) / (\Psi^*\cap A^2).
\]
Recall now that $\lan a,b\ran\in\cgq{\m{B}}(\The)\cap A^2$, so $\lan a,b\ran\in\cgq{\m{B}}((\Psi^*\cap A^2)\jn\The)\jn\Psi^*$. 
Letting $R:=(\Psi^*\cap A^2)\jn\The$ and $\f$ be the canonical homomorphism from $\m{B}$ to $\m{B}/\Psi^*$ with $\ker(\f)=\Psi^*$, an application of Lemma~\ref{l:corresp} yields
\[
\f^{-1}[\cgq{\m{B}/\Psi^*}(\f[(\Psi^*\cap A^2)\jn\The])]=\cgq{\m{B}}((\Psi^*\cap A^2)\jn\The)\jn\Psi^*.
\]
Hence, identifying the congruence $((\Psi^*\cap A^2)\jn\The) / (\Psi^*\cap A^2)$ of $\m{A}/(\Psi^*\cap A^2)$ with the corresponding subset of $B/\Psi^*$,
\[
\lan [a]_{\Psi^*},[b]_{\Psi^*}\ran\in\cgq{\m{B}/\Psi^*}(((\Psi^*\cap A^2)\jn\The) / (\Psi^*\cap A^2)).
\]
But, by Lemma~\ref{l:cepbasic}, this contradicts the assumption that $\m{B}/\Psi^*\in\Qrfsi$ has the $\Q$-\prp{CEP}.
\end{proof}

\begin{Corollary}\label{c:CEPvar}
Let $\V$ be any congruence-distributive variety.  Then $\V$ has the congruence extension property if and only if $\Vfsi$ has the congruence extension property. 
\end{Corollary}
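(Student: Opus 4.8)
The plan is to obtain this corollary as a direct specialization of Theorem~\ref{t:CEPmain}, since every variety is in particular a quasivariety. First I would observe that, as noted in the introduction, a variety $\V$ is relatively congruence-distributive if and only if it is congruence-distributive in the usual sense; hence the assumption that $\V$ is congruence-distributive guarantees that the hypothesis of Theorem~\ref{t:CEPmain} is satisfied when we regard $\V$ as a quasivariety.

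Next I would translate the $\Q$-specific notions into their varietal counterparts. For a variety $\V$, every congruence of each $\m{A}\in\V$ is a $\V$-congruence, so $\Conq\m{A}$ and $\Con\m{A}$ coincide; consequently the $\V$-\prp{CEP} is exactly the ordinary \prp{CEP}, and the class $\Vrfsi$ coincides with $\Vfsi$, which for a variety is just the class of finitely subdirectly irreducible members of $\V$. With these identifications in place, the equivalence furnished by Theorem~\ref{t:CEPmain} --- that $\V$ has the $\V$-\prp{CEP} if and only if $\Vrfsi=\Vfsi$ has the $\V$-\prp{CEP} --- reads precisely as the assertion that $\V$ has the \prp{CEP} if and only if $\Vfsi$ has the \prp{CEP}.

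Since all of the facts about varieties required for these translations were already recorded in the introduction, I do not expect any substantive obstacle: the only care needed is to verify that none of the relativized hypotheses or conclusions of Theorem~\ref{t:CEPmain} introduce anything beyond the standard varietal notions, which they do not. The proof therefore amounts to invoking Theorem~\ref{t:CEPmain} after these routine identifications.
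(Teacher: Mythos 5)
Your proposal is correct and matches the paper's intent exactly: the corollary is stated without proof precisely because it is the immediate specialization of Theorem~\ref{t:CEPmain} to varieties, using the identifications you spell out (congruence-distributive equals relatively congruence-distributive, the $\Q$-\prp{CEP} equals the \prp{CEP}, and $\Qrfsi=\Qfsi$ is the class of finitely subdirectly irreducible members). No further argument is needed.
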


Note also that every finitely subdirectly irreducible member of a variety $\V$ embeds into an ultraproduct of subdirectly irreducible members of $\V$, by the Relativized J{\'o}nsson Lemma~\cite{CD90}*{Lemma~1.5}. Hence, if $\Vsi$ is closed under ultraproducts, each $\m{A}\in\Vfsi$ embeds into some $\m{B}\in\Vsi$, and the fact that the \prp{CEP} is preserved under subalgebras yields the following known result.

\begin{Corollary}[\cite{Dav77}*{Theorem 3.3}]\label{c:davey}
Let $\V$ be any congruence-distributive variety such that $\Vsi$ is an elementary class. Then $\V$ has the congruence extension property if and only if $\Vsi$ has the congruence extension property.\footnote{This result also follows from a more general theorem of Kiss~\cite{Kis85b}*{Theorem~2.3} for congruence-modular varieties; however, the latter does not imply, or seem to be implied by, our Theorem~\ref{t:CEPmain}.}
\end{Corollary}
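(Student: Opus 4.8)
The forward implication is immediate: since $\Vsi\subseteq\V$, if every member of $\V$ has the \prp{CEP}, then in particular every member of $\Vsi$ does. The plan for the converse is to reduce to Corollary~\ref{c:CEPvar} by showing that, whenever $\Vsi$ has the \prp{CEP}, so does $\Vfsi$; the corollary then immediately yields that $\V$ has the \prp{CEP}.

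To carry out this reduction, I would first record the elementary observation that the \prp{CEP} is inherited by subalgebras: if $\m{B}$ has the \prp{CEP} and $\m{A}$ is (isomorphic to) a subalgebra of $\m{B}$, then $\m{A}$ has the \prp{CEP}. Indeed, given a subalgebra $\m{A}'$ of $\m{A}$ and $\The\in\Con\m{A}'$, viewing $\m{A}'$ as a subalgebra of $\m{B}$ and applying the \prp{CEP} of $\m{B}$ yields some $\Phi\in\Con\m{B}$ with $\Phi\cap (A')^2=\The$; then $\Phi\cap A^2\in\Con\m{A}$ restricts to $\The$ on $(A')^2$, as required. Next, I would invoke the Relativized J{\'o}nsson Lemma~\cite{CD90}*{Lemma~1.5} to embed each $\m{A}\in\Vfsi$ into an ultraproduct of members of $\Vsi$. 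Since $\Vsi$ is assumed elementary, it is closed under ultraproducts, so this ultraproduct lies in $\Vsi$ and hence has the \prp{CEP}; by the subalgebra remark above, $\m{A}$ therefore has the \prp{CEP} as well. As $\m{A}\in\Vfsi$ was arbitrary, $\Vfsi$ has the \prp{CEP}, and Corollary~\ref{c:CEPvar} completes the argument.

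The individual steps here are routine, so there is no single hard obstacle; the main content is organizational, namely chaining the embedding supplied by the J{\'o}nsson Lemma together with the subalgebra-heredity of the \prp{CEP} and with Corollary~\ref{c:CEPvar}. The one point requiring care is the role of the hypothesis that $\Vsi$ be elementary: it is used solely to guarantee closure under the ultraproducts produced by the J{\'o}nsson Lemma, and if this hypothesis fails (or is difficult to verify) the argument breaks down. This is precisely why Theorem~\ref{t:CEPmain}, phrased directly in terms of $\Vfsi=\Vrfsi$, is typically easier to apply than this corollary.
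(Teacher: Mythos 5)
Your proposal is correct and follows essentially the same route as the paper: the paper's argument (given in the paragraph preceding the corollary) likewise embeds each member of $\Vfsi$ into an ultraproduct of members of $\Vsi$ via the Relativized J\'onsson Lemma, uses that $\Vsi$ is elementary (hence closed under ultraproducts) and that the \prp{CEP} is inherited by subalgebras, and then applies Corollary~\ref{c:CEPvar}. Your explicit verification that the \prp{CEP} passes to subalgebras is a correct filling-in of a step the paper states as a known fact.
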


We now turn our attention to the relationship between the $\Q$-\prp{CEP} and the \prp{EP}, establishing first a simple lemma and useful corollary for investigating relatively finitely subdirectly irreducible algebras.

\begin{Lemma}\label{l:key}
Let $\Q$ be any quasivariety and let $\m{A}$ be a subalgebra of some $\m{B}\in\Q$. For any meet-irreducible $\The\in\Conq\m{A}$ satisfying $\cgq{\m{B}}(\The)\cap A^2 = \The$, there exists a meet-irreducible $\Phi\in\Conq\m{B}$ such that $\Phi\cap A^2 = \The$.
\end{Lemma}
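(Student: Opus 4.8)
The plan is to realize $\Phi$ as a maximal element of a suitable family of $\Q$-congruences of $\m{B}$ and then to show that maximality forces meet-irreducibility. First I would set
\[
S:=\{\Psi\in\Conq\m{B}\mid \cgq{\m{B}}(\The)\subseteq\Psi \text{ and } \Psi\cap A^2=\The\}.
\]
The hypothesis $\cgq{\m{B}}(\The)\cap A^2=\The$ guarantees $\cgq{\m{B}}(\The)\in S$, so $S\neq\emptyset$. For any $\Psi\in S$, the containment $\cgq{\m{B}}(\The)\subseteq\Psi$ already yields $\The=\cgq{\m{B}}(\The)\cap A^2\subseteq\Psi\cap A^2$, so the condition $\Psi\cap A^2=\The$ is equivalent to $\Psi\cap A^2\subseteq\The$; this reformulation is what makes unions of chains well behaved. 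Indeed, if $\{\Psi_i\}_{i\in I}$ is a chain in $\lan S,\subseteq\ran$, its union is a congruence of $\m{B}$ which, being the join of a chain, coincides with the join taken in $\Conq\m{B}$ and is therefore a $\Q$-congruence; moreover $(\bigcup_i\Psi_i)\cap A^2=\bigcup_i(\Psi_i\cap A^2)\subseteq\The$ and $\cgq{\m{B}}(\The)\subseteq\bigcup_i\Psi_i$, so the union lies in $S$. Zorn's Lemma then furnishes a maximal element $\Phi$ of $\lan S,\subseteq\ran$, and by construction $\Phi\cap A^2=\The$.

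It then remains to verify that $\Phi$ is meet-irreducible in $\Conq\m{B}$, which I would argue by contradiction. Suppose $\Phi=\Phi_1\cap\Phi_2$ with $\Phi_1,\Phi_2\in\Conq\m{B}$ and $\Phi\subsetneq\Phi_1$, $\Phi\subsetneq\Phi_2$. Each $\Phi_j$ satisfies $\cgq{\m{B}}(\The)\subseteq\Phi\subsetneq\Phi_j$, so maximality of $\Phi$ in $S$ forces $\Phi_j\notin S$, i.e., $\Phi_j\cap A^2\not\subseteq\The$; since $\Phi_j\cap A^2\supseteq\Phi\cap A^2=\The$, this means $\The\subsetneq\Phi_j\cap A^2$. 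The key point is that each restriction $\Phi_j\cap A^2$ is again a $\Q$-congruence of $\m{A}$: the natural map $a\mapsto[a]_{\Phi_j}$ embeds $\m{A}/(\Phi_j\cap A^2)$ into $\m{B}/\Phi_j\in\Q$, and $\Q$ is closed under subalgebras, so $\m{A}/(\Phi_j\cap A^2)\in\Q$. Now
\[
(\Phi_1\cap A^2)\cap(\Phi_2\cap A^2)=(\Phi_1\cap\Phi_2)\cap A^2=\Phi\cap A^2=\The,
\]
exhibiting $\The$ as a meet in $\Conq\m{A}$ of two strictly larger $\Q$-congruences, contradicting the meet-irreducibility of $\The$ in $\Conq\m{A}$. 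Hence $\Phi$ is meet-irreducible, and since $\Phi\cap A^2=\The$ it is the desired congruence.

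The step requiring the most care, and the main obstacle, is the verification that the restrictions $\Phi_j\cap A^2$ belong to $\Conq\m{A}$ rather than merely to $\Con\m{A}$: the meet-irreducibility hypothesis on $\The$ is relative to $\Conq\m{A}$, so the contradiction is legitimate only once both factors are known to be $\Q$-congruences, which is precisely where closure of $\Q$ under subalgebras (together with the second isomorphism theorem) enters. A secondary point is confirming that the union of a chain in $S$ remains a $\Q$-congruence; this relies on the fact, recalled in the introduction, that joins of chains agree in $\Conq\m{B}$ and $\Con\m{B}$, so that no appeal to the potentially ill-behaved arbitrary joins of $\Conq\m{B}$ is required.
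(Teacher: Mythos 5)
Your proof is correct and follows essentially the same route as the paper: apply Zorn's Lemma to the family of $\Q$-congruences of $\m{B}$ restricting to $\The$ on $A^2$ (non-empty by the hypothesis on $\cgq{\m{B}}(\The)$), and then derive meet-irreducibility of the maximal element from the meet-irreducibility of $\The$ together with maximality. The only differences are cosmetic --- the redundant extra condition $\cgq{\m{B}}(\The)\subseteq\Psi$ in your family and the contrapositive phrasing of the final step --- though your explicit check that the restrictions $\Phi_j\cap A^2$ lie in $\Conq\m{A}$ is a welcome detail the paper leaves tacit.
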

\begin{proof}
Consider any meet-irreducible $\The\in\Conq\m{A}$ satisfying $\cgq{\m{B}}(\The)\cap A^2 = \The$. By assumption, $T:=\{\Psi\in\Conq\m{B}\mid\Psi \cap A^2 =\The\}\neq\emptyset$, and, since every chain in $\lan T,\subseteq\ran$ has an upper bound (its union) in $T$, by Zorn's Lemma, $\lan T,\subseteq\ran$ has a maximal element $\Phi$. 

It remains to show that $\Phi$ is meet-irreducible in $\Conq\m{B}$, so let $\Phi=\Phi_1\cap\Phi_2$ for some $\Phi_1,\Phi\in\Conq\m{B}$. Then
\[
(\Phi_1\cap A^2)\cap(\Phi_2\cap A^2)=\Phi_1\cap\Phi_2 \cap A^2 =\Phi\cap A^2=\The
\]
and, since $\The$ is meet-irreducible in $\Conq\m{A}$, either $\Phi_1\cap A^2=\The$ or $\Phi_2\cap A^2=\The$. So $\Phi_1\in T$ or $\Phi_2\in T$. Hence, by the maximality of $\Phi$ in $\lan T,\subseteq\ran$, either $\Phi_1 =\Phi$ or  $\Phi_2 =\Phi$. So $\Phi$ is meet-irreducible in $\Conq\m{B}$.
\end{proof}

\begin{Corollary}\label{c:key}
Let $\Q$ be any quasivariety and suppose that $\m{A}\in\Qrfsi$ is a subalgebra of some $\m{B}\in\Q$. Then there exists a meet-irreducible $\Phi\in\Conq\m{B}$ such that $\Phi\cap A^2 = \De_A$, and hence there exist also a $\m{C}\in\Qrfsi$ and a surjective homomorphism $\f\colon\m{B}\to\m{C}$ such that $\ker(\f)\cap A^2 = \De_A$.
\end{Corollary}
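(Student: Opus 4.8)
The plan is to derive this directly from Lemma~\ref{l:key} by taking $\The=\De_A$. First I would observe that since $\m{A}\in\Qrfsi$, the least $\Q$-congruence $\De_A$ is meet-irreducible in $\Conq\m{A}$, by the characterization of relatively finitely subdirectly irreducible algebras recalled in the introduction. This supplies the meet-irreducibility hypothesis of the lemma.

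Next I would verify the remaining condition $\cgq{\m{B}}(\De_A)\cap A^2=\De_A$. Since $\De_A$ consists solely of diagonal pairs $\lan a,a\ran$ with $a\in A$, it is contained in the least $\Q$-congruence $\De_B$ of $\m{B}$; as $\De_B$ is itself a $\Q$-congruence, the $\Q$-congruence it generates satisfies $\cgq{\m{B}}(\De_A)=\De_B$. Intersecting with $A^2$ then yields $\De_B\cap A^2=\De_A$, exactly as required.

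With both hypotheses of Lemma~\ref{l:key} in hand, applying it produces a meet-irreducible $\Phi\in\Conq\m{B}$ with $\Phi\cap A^2=\De_A$, which establishes the first assertion. For the second, I would set $\m{C}:=\m{B}/\Phi$ and let $\f\colon\m{B}\to\m{C}$ be the canonical surjection, so that $\ker(\f)=\Phi$. Because $\Phi$ is meet-irreducible in $\Conq\m{B}$, the consequence of the correspondence theorem recorded in the introduction gives $\m{C}=\m{B}/\Phi\in\Qrfsi$, while $\ker(\f)\cap A^2=\Phi\cap A^2=\De_A$ is immediate.

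There is no substantive obstacle here: the corollary is essentially the instantiation $\The=\De_A$ of Lemma~\ref{l:key}, and the only point requiring a moment's care is the routine computation that the $\Q$-congruence of $\m{B}$ generated by the diagonal of $A$ is simply the identity congruence $\De_B$, so that restricting it to $A^2$ recovers $\De_A$.
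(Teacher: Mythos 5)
Your proof is correct and matches the paper's intended argument: the corollary is stated without proof precisely because it is the instantiation $\The=\De_A$ of Lemma~\ref{l:key}, with the hypothesis $\cgq{\m{B}}(\De_A)\cap A^2=\De_A$ verified exactly as you do and the final step supplied by the correspondence-theorem fact from the introduction that $\m{B}/\Phi\in\Qrfsi$ if and only if $\Phi$ is meet-irreducible in $\Conq\m{B}$. No gaps.
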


The following result provides a general sufficient criterion for a subclass $\K$ of a quasivariety $\Q$ to have the \prp{EP}. 

\begin{Proposition}\label{p:cepsep}
Let $\K$ be a subclass of a quasivariety $\Q$ satisfying 
\begin{enumerate}[label=(\roman*)]
\item $\K$ is closed under isomorphisms; 
\item for any $\m{B}\in\Q$ and subalgebra $\m{A}\in\K$ of $\m{B}$, there exists a $\Phi\in\Conq\m{B}$ such that $\m{B}/\Phi\in \K$ and $\Phi \cap A^2 =\Delta_A$;
\item $\K$ has the $\Q$-congruence extension property.
\end{enumerate}
Then $\K$  has the extension property.
\end{Proposition}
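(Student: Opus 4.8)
The plan is to verify the \prp{EP} directly from its definition. Consider an arbitrary injective-surjective span $\lan\m{A},\m{B},\m{C},\f_B,\f_C\ran$ in $\K$, so that $\f_B$ is an embedding and $\f_C$ is surjective. Identifying $\m{A}$ with the subalgebra $\f_B[A]$ of $\m{B}$ (so that $\f_B$ becomes the inclusion), I set $\The:=\ker(\f_C)$. Since $\m{A}/\The\cong\m{C}\in\K\subseteq\Q$, we have $\The\in\Conq\m{A}$, and the task is to extend $\The$ suitably to $\m{B}$ and then repair the resulting quotient so that it lands in $\K$.

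First I would invoke the $\Q$-\prp{CEP}. As $\m{B}\in\K$ and $\K$ has the $\Q$-\prp{CEP} by hypothesis~(iii), there is a $\Phi\in\Conq\m{B}$ with $\Phi\cap A^2=\The$. Let $\pi\colon\m{B}\to\m{B}/\Phi$ be the canonical surjection. The restriction $\pi|_A$ has kernel $\Phi\cap A^2=\The=\ker(\f_C)$, so the homomorphism theorem provides a unique embedding $\eta\colon\m{C}\to\m{B}/\Phi$ with $\eta\f_C=\pi|_A=\pi\f_B$. At this stage the square already commutes into $\m{B}/\Phi$; the difficulty is that $\m{B}/\Phi$ lies only in $\Q$, so this does not yet yield an amalgam in $\K$.

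The crux --- and the step I expect to be the main obstacle --- is to replace $\m{B}/\Phi$ by a member of $\K$ without collapsing the embedded copy $\eta[C]$ of $\m{C}$; this is precisely the role of hypothesis~(ii). The subalgebra $\eta[C]$ of $\m{B}/\Phi$ is isomorphic to $\m{C}\in\K$, hence lies in $\K$ by~(i). Applying~(ii) to $\m{B}/\Phi\in\Q$ and its subalgebra $\eta[C]$ yields a $\Psi\in\Conq(\m{B}/\Phi)$ such that $(\m{B}/\Phi)/\Psi\in\K$ and $\Psi\cap(\eta[C])^2=\De_{\eta[C]}$. Writing $\rho\colon\m{B}/\Phi\to\m{D}:=(\m{B}/\Phi)/\Psi$ for the canonical map, I would take $\ps_B:=\rho\pi$ and $\ps_C:=\rho\eta$.

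It then remains to check that $\lan\m{D},\ps_B,\ps_C\ran$ witnesses the \prp{EP}. Here $\m{D}\in\K$ and $\ps_B\colon\m{B}\to\m{D}$ is a homomorphism. The map $\ps_C$ is an embedding, since $\eta$ is an embedding and $\rho$ restricted to $\eta[C]$ is injective (its kernel there being $\Psi\cap(\eta[C])^2=\De_{\eta[C]}$). Finally, commutativity is immediate from the construction: $\ps_B\f_B=\rho\pi\f_B=\rho\eta\f_C=\ps_C\f_C$. This produces the commutative diagram of Figure~\ref{f:properties}(i), so $\K$ has the \prp{EP}, as required.
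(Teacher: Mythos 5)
Your proposal is correct and follows essentially the same route as the paper's proof: first use the $\Q$-\prp{CEP} to extend $\ker(\f_C)$ to a $\Q$-congruence of $\m{B}$, obtaining a commuting square into $\m{B}/\Phi\in\Q$ with $\m{C}$ embedded, and then apply hypothesis~(ii) to that quotient and the embedded copy of $\m{C}$ to land in $\K$ without collapsing $\m{C}$. Your write-up is slightly more explicit about the final application of~(ii), but the argument is the same.
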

\begin{proof}
Consider any $\m{A},\m{B},\m{C}\in\K$, embedding $\f_B\colon\m{A}\to\m{B}$, and surjective homomorphism $\f_C\colon\m{A}\to\m{C}$. Since $\K$ is closed under isomorphisms, by (i), we may assume without loss of generality that $\m{A}$ is a subalgebra of $\m{B}$ and that $\m{C}=\m{A}/\The$ for some $\The\in\Conq\m{A}$. Since $\K$ has the $\Q$-\prp{CEP}, by (iii), there exists a $\Psi\in\Conq\m{B}$ such that $\Psi\cap A^2 = \The$. Let $\m{D}:=\m{B}/\Psi\in\Q$ and let $\ps_B$ be the canonical homomorphism from $\m{B}$ to $\m{D}$ mapping each $b\in B$ to $[b]_\Psi\in B/\Psi$. Observe also that for any $a,b\in A$,
\[
[a]_\The=[b]_\The \iff \lan a,b\ran\in\The \iff \lan a,b\ran\in\Psi\iff [a]_\Psi=[b]_\Psi. 
\]
Hence we obtain an embedding $\ps_C$ of $\m{C}$ into $\m{D}$ mapping each $[a]_\The\in A/\The$ to $[a]_\Psi\in B/\Psi$ such that $\ps_B \f_B=\ps_C\f_C$. Finally, by (ii), there exist a $\m{D^*}\in\K$, a surjective homomorphism $\ps^*_B\colon\m{D}\to\m{D^*}$, and an embedding $\ps^*_C\colon\m{C}\to\m{D^*}$ such that $\ps^*_B \ps_B\f_B=\ps^*_C\f_C$. So $\K$ has the \prp{EP}.
\end{proof}

In particular, combining Proposition~\ref{p:cepsep} with Corollary~\ref{c:key}, we obtain the following result for the class of relatively finitely subdirectly irreducible members of a quasivariety.

\begin{Corollary}\label{c:cepsep}
Let $\Q$ be any quasivariety. If $\Qrfsi$ has the $\Q$-congruence extension property, then $\Qrfsi$ has the extension property.
\end{Corollary}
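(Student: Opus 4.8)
The plan is to apply Proposition~\ref{p:cepsep} with $\K = \Qrfsi$, so that it suffices to verify the three hypotheses (i)--(iii) of that proposition for this choice of $\K$. Two of them are essentially free, and the whole content of the corollary is packaged into the third.

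First I would check (i), that $\Qrfsi$ is closed under isomorphisms. This is immediate: any isomorphism $\m{A}\to\m{A}'$ induces a lattice isomorphism $\Conq\m{A}\to\Conq\m{A}'$ carrying $\De_A$ to $\De_{A'}$, so meet-irreducibility of the least congruence is preserved, as is membership in $\Q$ (which is closed under isomorphisms by definition). Condition (iii) is literally the hypothesis of the corollary, namely that $\Qrfsi$ has the $\Q$-\prp{CEP}, so there is nothing to prove there.

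The crux is condition (ii): for any $\m{B}\in\Q$ and subalgebra $\m{A}\in\Qrfsi$ of $\m{B}$, I must produce a $\Phi\in\Conq\m{B}$ with $\m{B}/\Phi\in\Qrfsi$ and $\Phi\cap A^2 = \De_A$. Here I would invoke Corollary~\ref{c:key}, which supplies a \emph{meet-irreducible} $\Phi\in\Conq\m{B}$ satisfying $\Phi\cap A^2 = \De_A$. The key observation is that meet-irreducibility is exactly what upgrades the quotient to the desired class: by the correspondence between quotients and meet-irreducible $\Q$-congruences recalled in the introduction (meet-irreducibility of $\Phi$ in $\Conq\m{B}$ being equivalent to $\m{B}/\Phi$ being finitely $\Q$-subdirectly irreducible), we get $\m{B}/\Phi\in\Qrfsi$, which is precisely what (ii) demands.

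With (i)--(iii) in hand, Proposition~\ref{p:cepsep} immediately yields that $\Qrfsi$ has the \prp{EP}. I do not expect any real obstacle, since Corollary~\ref{c:key} already isolates the one substantive ingredient; the only point requiring attention is recognising that the meet-irreducibility of the extension $\Phi$ guaranteed by Corollary~\ref{c:key} is exactly the extra information needed to place $\m{B}/\Phi$ back inside $\Qrfsi$, so that hypothesis (ii) of Proposition~\ref{p:cepsep} is met and the result follows.
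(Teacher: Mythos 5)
Your proposal is correct and matches the paper's own argument, which likewise obtains the corollary by combining Proposition~\ref{p:cepsep} (with $\K=\Qrfsi$) with Corollary~\ref{c:key}, using meet-irreducibility of $\Phi$ to place $\m{B}/\Phi$ in $\Qrfsi$ for condition (ii). No gaps.
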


Next, we provide a general sufficient criterion for a subclass $\K$ of a quasivariety $\Q$ to have the $\Q$-\prp{CEP}.

\begin{Proposition}\label{p:rcep iff ep}
Let $\K$ be a subclass of a quasivariety $\Q$ satisfying 
\begin{enumerate}[label=(\roman*)]
\item $\K$ is closed under subalgebras; 
\item every relatively subdirectly irreducible member of $\Q$ belongs to $\K$;
\item $\K$ has the extension property.
\end{enumerate}
Then $\K$ has the $\Q$-congruence extension property.
\end{Proposition}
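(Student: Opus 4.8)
The plan is to show directly that every $\m{B}\in\K$ has the $\Q$-\prp{CEP}. By Lemma~\ref{l:cepbasic}(a), it suffices to prove that for every subalgebra $\m{A}$ of $\m{B}$ and every \emph{completely meet-irreducible} $\The\in\Conq\m{A}$, there exists a $\Psi\in\Conq\m{B}$ with $\Psi\cap A^2=\The$. Passing to completely meet-irreducible congruences is the crucial opening move, since it is exactly what lets hypothesis (ii) come into play: as noted in the introduction, $\The$ completely meet-irreducible in $\Conq\m{A}$ means precisely that $\m{A}/\The$ is relatively subdirectly irreducible, i.e.\ $\m{A}/\The\in\Qrsi$.

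So fix such $\m{B}$, $\m{A}$, and $\The$. By (i) the subalgebra $\m{A}$ belongs to $\K$, and by (ii) we have $\m{A}/\The\in\Qrsi\subseteq\K$; since $\The\in\Conq\m{A}$, the quotient $\m{A}/\The$ indeed lies in $\Q$. I would then form the injective-surjective span $\lan\m{A},\m{B},\m{A}/\The,\f_B,\f_C\ran$ in $\K$, where $\f_B\colon\m{A}\to\m{B}$ is the inclusion embedding and $\f_C\colon\m{A}\to\m{A}/\The$ is the canonical surjection. Applying the extension property granted by (iii) yields a $\m{D}\in\K$, a homomorphism $\ps_B\colon\m{B}\to\m{D}$, and an \emph{embedding} $\ps_C\colon\m{A}/\The\to\m{D}$ with $\ps_B\f_B=\ps_C\f_C$.

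The natural candidate is $\Psi:=\ker(\ps_B)$. This is a $\Q$-congruence of $\m{B}$: the algebra $\m{B}/\Psi$ embeds into $\m{D}\in\Q$, and since $\Q$ is closed under subalgebras and isomorphisms, $\m{B}/\Psi\in\Q$, whence $\Psi\in\Conq\m{B}$. To compute $\Psi\cap A^2$, take $a,b\in A$; commutativity gives $\ps_B(a)=\ps_C([a]_\The)$ and $\ps_B(b)=\ps_C([b]_\The)$, so, using that $\ps_C$ is injective, $\lan a,b\ran\in\Psi$ iff $[a]_\The=[b]_\The$ iff $\lan a,b\ran\in\The$. Hence $\Psi\cap A^2=\The$, which is exactly what Lemma~\ref{l:cepbasic}(a) requires. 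As $\m{B}\in\K$ was arbitrary, $\K$ has the $\Q$-\prp{CEP}.

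The argument is short, and I do not expect a serious obstacle beyond the two points that must be handled with care. The first is the opening reduction: without restricting to completely meet-irreducible congruences one cannot guarantee that $\m{A}/\The$ is relatively subdirectly irreducible, and so cannot invoke (ii) to place it in $\K$ and legitimately form a span in $\K$. The second is the injectivity of $\ps_C$, which the \prp{EP} supplies on the surjective side of the span; this is precisely what forces $\ker(\ps_B)$ to restrict back to $\The$ rather than to a strictly larger $\Q$-congruence. Everything else is routine verification.
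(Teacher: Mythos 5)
Your proposal is correct and follows essentially the same route as the paper's own proof: the reduction to completely meet-irreducible congruences via Lemma~\ref{l:cepbasic}(a), the use of hypotheses (i) and (ii) to place $\m{A}$ and $\m{A}/\The$ in $\K$, the application of the \prp{EP} to the inclusion/quotient span, and the verification that $\Psi:=\ker(\ps_B)$ is a $\Q$-congruence restricting to $\The$. No gaps.
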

\begin{proof}
Consider any $\m{B}\in\K$. To show that $\m{B}$ has the $\Q$-\prp{CEP}, it suffices, by Lemma~\ref{l:cepbasic}(a), to prove that for any subalgebra $\m{A}$ of $\m{B}$ and completely meet-irreducible $\The\in\Conq\m{A}$, there exists a $\Psi\in\Conq\m{B}$ such that $\Psi\cap A^2 = \The$. Note first that $\m{A}\in\K$, by (i), and $\m{A}/\The\in\Qrsi\subseteq\K$, by (ii). Now let $\f_C\colon\m{A}\to\m{A}/\The$ be the canonical homomorphism mapping $a\in A$ to $[a]_\The\in A/\The$ and let $\f_B\colon\m{A}\to\m{B}$ be the inclusion map. Since $\K$ has the \prp{EP}, by (iii), there exist a $\m{D}\in\K$, a homomorphism $\ps_B\colon\m{B}\to\m{D}$, and an embedding $\ps_C\colon\m{A}/\The\to\m{D}$ such that $\ps_B \f_B=\ps_C\f_C$. Let $\Psi :=\ker(\ps_B)$. Then, by the homomorphism theorem, $\m{B}/\Psi$ is isomorphic to a subalgebra of $\m{D}\in\K$, so $\m{B}/\Psi\in\Q$ and $\Psi\in\Conq\m{B}$. Moreover, for any $a,b\in A$, using the injectivity of $\ps_C$ for the third equivalence,
\begin{align*}
\lan a,b\ran\in \Psi
& \:\iff\:  \ps_B \f_B(a)=\ps_B \f_B(b)\\
& \:\iff\:  \ps_C \f_C(a)=\ps_C \f_C(b)\\
& \:\iff\: \f_C(a)= \f_C(b)\\
& \:\iff\: \lan a,b\ran\in\ker(\f_C)=\The.
\end{align*}
That is, $\Psi\cap A^2 = \The$. So $\m{B}$ has the $\Q$-\prp{CEP}.
\end{proof}

In particular, we obtain the following generalization of~\cite{Bac72}*{Lemma~1.2}. 

\begin{Corollary}\label{c:rcep iff ep}
Let $\Q$ be any quasivariety. Then $\Q$ has the $\Q$-congruence extension property if and only if $\Q$ has the extension property.
\end{Corollary}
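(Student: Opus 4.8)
The plan is to obtain both implications as direct specializations of the two immediately preceding propositions to the case $\K=\Q$. Since every quasivariety is closed under isomorphisms and subalgebras, the structural hypotheses of those propositions hold automatically when $\K=\Q$, so the whole argument reduces to checking the remaining conditions, which are either trivial or amount to the standing assumption.

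For the direction from the $\Q$-\prp{CEP} to the \prp{EP}, I would invoke Proposition~\ref{p:cepsep} with $\K=\Q$. Condition~(i) holds because $\Q$ is a quasivariety, and condition~(iii) is exactly the hypothesis that $\Q$ has the $\Q$-\prp{CEP}. The only point requiring any verification is condition~(ii): given $\m{B}\in\Q$ and a subalgebra $\m{A}$ of $\m{B}$, it suffices to take $\Phi=\Delta_B$, since then $\m{B}/\Delta_B\cong\m{B}\in\Q$ and $\Delta_B\cap A^2=\Delta_A$. Proposition~\ref{p:cepsep} then yields the \prp{EP} for $\Q$.

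For the converse, from the \prp{EP} to the $\Q$-\prp{CEP}, I would invoke Proposition~\ref{p:rcep iff ep} with $\K=\Q$. Here all three hypotheses are immediate: $\Q$ is closed under subalgebras as a quasivariety, every relatively subdirectly irreducible member of $\Q$ trivially belongs to $\Q$, and the \prp{EP} holds by assumption. Proposition~\ref{p:rcep iff ep} then delivers the $\Q$-\prp{CEP}.

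I do not anticipate any genuine obstacle, as all the substantive content has already been established in the two propositions and the corollary merely packages them by setting $\K=\Q$. The only item worth flagging is the verification of condition~(ii) of Proposition~\ref{p:cepsep}, where a suitable congruence must be exhibited; but the trivial congruence $\Delta_B$ suffices, since it restricts to $\Delta_A$ on $A^2$ while leaving $\m{B}$ unchanged up to isomorphism.
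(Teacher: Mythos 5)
Your proposal is correct and matches the paper's intended argument exactly: the corollary is obtained by specializing Proposition~\ref{p:cepsep} and Proposition~\ref{p:rcep iff ep} to $\K=\Q$, with the only non-automatic hypothesis, condition~(ii) of Proposition~\ref{p:cepsep}, discharged by taking $\Phi=\Delta_B$ just as you do.
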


We can now combine these results to obtain the second main result of this section.

\begin{Theorem}\label{t:sepcep}
Let $\Q$ be a relatively congruence-distributive quasivariety such that $\Qrfsi=\Qfsi$ is closed under subalgebras. The following are equivalent:
\begin{enumerate}[label=(\arabic*)]
\item	$\Q$ has the $\Q$-congruence extension property.
\item	$\Q$ has the extension property.
\item	$\Qrfsi=\Qfsi$ has the $\Q$-congruence extension property. 
\item	$\Qrfsi=\Qfsi$ has the extension property.
\end{enumerate}
\end{Theorem}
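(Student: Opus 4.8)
The plan is to assemble the results already proved in this section into the cycle $(1)\Leftrightarrow(2)\Leftrightarrow(3)\Rightarrow(4)\Rightarrow(3)$, which yields the equivalence of all four statements. Three of these links are immediate. First, $(1)\Leftrightarrow(2)$ is exactly Corollary~\ref{c:rcep iff ep}, which holds for any quasivariety and so in particular for $\Q$. Second, $(1)\Leftrightarrow(3)$ is Theorem~\ref{t:CEPmain}, applicable because $\Q$ is relatively congruence-distributive (and recall that this hypothesis also guarantees $\Qrfsi=\Qfsi$). Third, $(3)\Rightarrow(4)$ is Corollary~\ref{c:cepsep}: if $\Qrfsi$ has the $\Q$-\prp{CEP}, then it has the \prp{EP}. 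Thus the only implication left to establish is $(4)\Rightarrow(3)$, which closes the loop.

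For $(4)\Rightarrow(3)$, I would apply Proposition~\ref{p:rcep iff ep} to the subclass $\K:=\Qrfsi=\Qfsi$ and verify its three hypotheses. Hypothesis~(i), closure under subalgebras, is precisely the standing assumption of the theorem. Hypothesis~(iii), that $\K$ has the \prp{EP}, is exactly statement~$(4)$. The only point requiring a word of justification is hypothesis~(ii), namely that every relatively subdirectly irreducible member of $\Q$ lies in $\K$; this follows because, for any $\m{A}\in\Qrsi$, the least congruence $\De_A$ is completely meet-irreducible in $\Conq\m{A}$ and hence meet-irreducible there, so $\m{A}\in\Qrfsi=\K$, giving $\Qrsi\subseteq\K$. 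With the three hypotheses in hand, Proposition~\ref{p:rcep iff ep} yields that $\K=\Qrfsi$ has the $\Q$-\prp{CEP}, which is statement~$(3)$.

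The main obstacle here is essentially organizational rather than mathematical: the theorem is a synthesis that packages Theorem~\ref{t:CEPmain}, Corollary~\ref{c:rcep iff ep}, Corollary~\ref{c:cepsep}, and Proposition~\ref{p:rcep iff ep} into a single equivalence, and the only genuine verification needed beyond quoting these results is the one-line inclusion $\Qrsi\subseteq\Qrfsi$ that feeds hypothesis~(ii) of Proposition~\ref{p:rcep iff ep}. Since $\Qrfsi=\Qfsi$ throughout (by relative congruence-distributivity), statements phrased with either class coincide, and no further bookkeeping is required.
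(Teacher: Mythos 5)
Your proposal is correct and follows exactly the paper's own argument: $(1)\Leftrightarrow(2)$ by Corollary~\ref{c:rcep iff ep}, $(1)\Leftrightarrow(3)$ by Theorem~\ref{t:CEPmain}, $(3)\Rightarrow(4)$ by Corollary~\ref{c:cepsep}, and $(4)\Rightarrow(3)$ by Proposition~\ref{p:rcep iff ep}. Your explicit verification of hypothesis~(ii) via $\Qrsi\subseteq\Qrfsi$ is a correct detail the paper leaves implicit.
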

\begin{proof}
The equivalence of (1) and (2) is a special case of Corollary~\ref{c:rcep iff ep}, and the equivalence of (1) and (3), and the implications from (3) to (4), and from (4) to (3), follow from Theorem~\ref{t:CEPmain}, Corollary~\ref{c:cepsep}, and Proposition~\ref{p:rcep iff ep}, respectively.
\end{proof}

\begin{Corollary}
Let $\V$ be a congruence-distributive variety such that $\Vfsi$ is closed under subalgebras. The following are equivalent:
\begin{enumerate}[label=(\arabic*)]
\item	$\V$ has the congruence extension property.
\item	$\V$ has the extension property.
\item	$\Vfsi$ has the congruence extension property. 
\item	$\Vfsi$ has the extension property.
\end{enumerate}
\end{Corollary}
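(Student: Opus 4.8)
The plan is to obtain this corollary as a direct specialization of Theorem~\ref{t:sepcep} to the case $\Q = \V$, relying on the identifications between relativized and absolute notions that hold whenever the quasivariety is a variety. First I would note that every variety is in particular a quasivariety, and, as recalled in the introduction, a variety is relatively congruence-distributive precisely when it is congruence-distributive in the usual sense. Thus a congruence-distributive variety $\V$ satisfies the standing hypothesis of Theorem~\ref{t:sepcep} that $\Q$ is relatively congruence-distributive.

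Next I would carry out the translation of the relativized vocabulary. Since $\V$ is a variety, every congruence of each $\m{A}\in\V$ is a $\V$-congruence, so $\Conq\m{A}$ and $\Con\m{A}$ coincide; consequently the $\V$-\prp{CEP} is exactly the ordinary \prp{CEP}. Likewise, because the notions of being (finitely) $\V$-subdirectly irreducible and (finitely) subdirectly irreducible coincide for varieties, we have $\Vrfsi=\Vfsi$ as the class appearing in Theorem~\ref{t:sepcep}. In particular the hypothesis that $\Qrfsi=\Qfsi$ is closed under subalgebras becomes the stated hypothesis that $\Vfsi$ is closed under subalgebras, and the four equivalent conditions (1)--(4) of Theorem~\ref{t:sepcep} translate verbatim into the four conditions of the present corollary, with the $\Q$-\prp{CEP} read as the \prp{CEP} and $\Qrfsi=\Qfsi$ read as $\Vfsi$.

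With these observations in place, applying Theorem~\ref{t:sepcep} to $\Q=\V$ immediately yields the equivalence of (1)--(4). There is no substantive obstacle here: the entire content lies in verifying that the abstract hypotheses of Theorem~\ref{t:sepcep} specialize correctly, and each of these verifications is furnished by the remarks already established in the introduction. The only point requiring any care is ensuring that the relativized congruence-lattice $\Conq\m{A}$ is genuinely identified with $\Con\m{A}$ for a variety, so that \emph{all} occurrences of the $\Q$-\prp{CEP}, relative congruence-distributivity, and relative (finite) subdirect irreducibility collapse to their absolute counterparts; once this is confirmed, the corollary follows at once.
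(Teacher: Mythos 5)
Your proposal is correct and matches the paper's intent exactly: the corollary is stated without proof precisely because it is the specialization of Theorem~\ref{t:sepcep} to $\Q=\V$, using the identifications $\Conq\m{A}=\Con\m{A}$, $\Q$-\prp{CEP} $=$ \prp{CEP}, and $\Qrfsi=\Qfsi=\Vfsi$ for a variety, all of which are recorded in the introduction. Nothing further is needed.
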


\begin{Remark}
Even for a congruence-distributive variety $\V$, it is possible for $\Vfsi$ to have the \prp{EP} but not the \prp{CEP}. For example, let $\V$ be the variety generated by the lattice-ordered monoid $\m{C}_4 = \lan \{-2,-1,1,2\},\min,\max,\cdot,1\ran$ with multiplication table
\[
\begin{array}{|r||r|r|r|r|}
\hline
\cdot & -2 & -1  & 1 & 2\\
\hline\hline
-2 &  -2 & -2 & -2 & -2\\
\hline
-1 &  -2 & -1 & -1 & 2\\
\hline
1  &  -2 & -1 & 1 & 2\\
\hline
2  &  -2 & 2 & 2 & 2\\
\hline
\end{array}
\]
The proper subuniverses of $\m{C}_4$ are $C_1=\{1\}$, $C_2=\{-1,1\}$, $C^\delta_2=\{1,2\}$, $C_3 = \{-1,1,2\}$, $C^\delta_3=\{-2,1,2\}$, and $C^*_3 = \{-2,-1,1\}$, up to isomorphism, and any homomorphic image of $\m{C}_4$ is isomorphic to one of its subalgebras. As shown in~\cite{San2?}, the algebra $\m{C}_4$, and hence $\Vfsi$, does not have the \prp{CEP}: just observe that $\The:=\De_{C^*_3}\cup\{\lan -1,-2\ran,\lan -2,-1\ran\}\in\Con\m{C}^*_3$, but $\cg{\m{C}_4}(\The)= C_4\times C_4$. On the other hand, using the fact that $\m{C}^*_3\not\in\Vfsi$, it is easy to confirm that $\Vfsi$ has the \prp{EP}. 
\end{Remark}


\section{The Amalgamation Property} \label{s:ap}

We first recall a useful necessary and sufficient condition for the existence of amalgams. 

\begin{Lemma}[\cite{GL71}*{Lemma~2}] \label{l:gratzer}
Let  $\lan\m{A},\m{B},\m{C},\f_B,\f_C \ran$ be a doubly injective span in a class of similar algebras $\K$ and suppose that 
\begin{enumerate}[label=(\alph*)]
\item for any distinct $x, y\in B$, there exist a $\m{D}_B^{xy}\in \K$ and homomorphisms $\ps_B^{xy}\colon\m{B}\to\m{D}_B^{xy}$ and $\ps_C^{xy}\colon\m{C}\to\m{D}_B^{xy}$ satisfying $\ps_B^{xy} \f_B= \ps_C^{xy} \f_C$ and $\ps_B^{xy}(x)\neq \ps_B^{xy}(y)$;
\item for any distinct $x, y\in C$, there exist a $\m{D}_C^{xy}\in \K$ and homomorphisms $\x_B^{xy}\colon\m{B}\to\m{D}_C^{xy}$ and $\x_C^{xy}\colon\m{C}\to\m{D}_C^{xy}$ satisfying $\x_B^{xy} \f_B= \x_C^{xy} \f_C$ and $\x_C^{xy}(x)\neq \x_C^{xy}(y)$.
\end{enumerate}
Then $\lan\m{A},\m{B},\m{C},\f_B,\f_C \ran$ has an amalgam $\lan\m{D},\ps_B,\ps_C\ran$, where $\m{D}$ is the product of the algebras in the set $\{\m{D}_B^{xy}\mid x,y\in B, x\neq y\}\cup\{\m{D}_C^{xy}\mid x,y\in C,x\neq y\}$.
\end{Lemma}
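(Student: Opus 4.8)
The plan is to construct the amalgam $\m{D}$ as the product of all the ``local'' algebras $\m{D}_B^{xy}$ and $\m{D}_C^{xy}$ provided by hypotheses (a) and (b), and to define $\ps_B$ and $\ps_C$ as the obvious maps into this product, using the local homomorphisms componentwise. The main thing to verify is that this recipe genuinely yields an \emph{amalgam}, that is, that $\ps_B$ and $\ps_C$ are \emph{embeddings} (not merely homomorphisms) and that the square commutes.

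Concretely, I would index the product by the set $J := \{\lan x,y\ran \mid x,y\in B,\ x\neq y\}\cup\{\lan x,y\ran\mid x,y\in C,\ x\neq y\}$, writing $\m{D} = \prod_{j\in J}\m{D}_j$, where $\m{D}_j$ is $\m{D}_B^{xy}$ or $\m{D}_C^{xy}$ according to whether $j$ records a pair from $B$ or from $C$. Define $\ps_B\colon\m{B}\to\m{D}$ by letting its $j$-th component be $\ps_B^{xy}$ when $j=\lan x,y\ran$ comes from $B$, and $\x_B^{xy}$ when $j$ comes from $C$; define $\ps_C\colon\m{C}\to\m{D}$ analogously using $\ps_C^{xy}$ and $\x_C^{xy}$. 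Each component map is a homomorphism into the corresponding factor, so $\ps_B$ and $\ps_C$ are homomorphisms into the product by the universal property. First I would check commutativity: for each coordinate $j$ the relevant local identity ($\ps_B^{xy}\f_B=\ps_C^{xy}\f_C$ or $\x_B^{xy}\f_B=\x_C^{xy}\f_C$) holds, and since equality in a product is checked coordinatewise, $\ps_B\f_B=\ps_C\f_C$ follows at once.

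The crux is injectivity. To see that $\ps_B$ is an embedding, take distinct $x,y\in B$; then the coordinate $j=\lan x,y\ran$ coming from $B$ carries $\ps_B^{xy}$, and hypothesis (a) guarantees $\ps_B^{xy}(x)\neq\ps_B^{xy}(y)$, so $\ps_B(x)$ and $\ps_B(y)$ already differ in that single coordinate, whence $\ps_B(x)\neq\ps_B(y)$. The argument for $\ps_C$ is symmetric, using the coordinate $\lan x,y\ran$ from $C$ and the inequality $\x_C^{xy}(x)\neq\x_C^{xy}(y)$ supplied by (b). Thus both maps separate points and are therefore embeddings, and $\lan\m{D},\ps_B,\ps_C\ran$ is an amalgam in $\K$ (using that $\K$ is understood to be closed under the products appearing in the statement).

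I do not expect any serious obstacle here: the proof is essentially a bookkeeping exercise in the universal property of products together with the separation conditions built into the hypotheses. The only points demanding a little care are keeping the two families of factors (those indexed by pairs from $B$ versus pairs from $C$) notationally distinct so that the correct local map is used in each coordinate, and noting that while \emph{each} coordinate map need only separate one designated pair, the \emph{full} product map separates \emph{every} pair because distinct points always disagree in at least the coordinate dedicated to them. No delicate congruence-theoretic or lattice-theoretic input is required for this lemma, in contrast to the earlier results of the section.
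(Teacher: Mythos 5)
Your argument is correct and is exactly the intended proof: the paper does not prove this lemma itself but imports it from Gr\"atzer--Lakser (their Lemma~2), where the argument is precisely this product construction --- componentwise definition of $\ps_B$ and $\ps_C$, coordinatewise verification of $\ps_B\f_B=\ps_C\f_C$, and separation of each pair of distinct points by its own dedicated coordinate (with the pairs from $B$ and from $C$ indexing disjoint copies of the factors). One small caution: the lemma does not assume that $\K$ is closed under products --- its conclusion only asserts that the product $\m{D}$ (which may well lie outside $\K$; this is exactly how the lemma is used later for $\K=\Qrfsi$) is an amalgam --- so your closing parenthetical about $\K$ being closed under the relevant products should be dropped rather than relied upon.
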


The following result will play a key role in the proof of our Theorem~\ref{t:APmain}. A slightly weaker version (applying only to varieties) was first proved in~\cite{MMT14} and used to establish a special case of Theorem~\ref{t:APmain} where $\V$ is a variety of semilinear residuated lattices with the \prp{CEP} and $\Vfsi$ is the class of totally ordered members of $\V$. 

\begin{Proposition}[cf.~\cite{MMT14}*{Theorem~9}]\label{p:condition}
Let $\K$ be a subclass of a quasivariety $\Q$ satisfying 
\begin{enumerate}[label=(\roman*)]
\item $\K$ is closed under isomorphisms and subalgebras; 
\item every relatively subdirectly irreducible member of $\Q$ belongs to $\K$;
\item for any $\m{B}\in\Q$ and subalgebra $\m{A}$ of $\m{B}$, if $\The\in\Conq\m{A}$ and  $\m{A}/\The\in \K$, then there exists a $\Phi\in\Conq\m{B}$ such that $\Phi \cap A^2 =\The$ and $\m{B}/\Phi\in \K$;
\item every doubly injective span in $\K$ has an amalgam in $\Q$.
\end{enumerate}
Then $\Q$ has the amalgamation property.
\end{Proposition}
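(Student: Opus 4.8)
The plan is to reduce the amalgamation problem to the Gr\"atzer--Lakser criterion (Lemma~\ref{l:gratzer}) applied with the ambient class taken to be $\Q$ itself. Let $\lan\m{A},\m{B},\m{C},\f_B,\f_C\ran$ be a doubly injective span in $\Q$; after replacing $\m{B}$ and $\m{C}$ by isomorphic copies we may assume that $\m{A}$ is a common subalgebra of $\m{B}$ and $\m{C}$ and that $\f_B,\f_C$ are inclusion maps. Since $\Q$ is closed under direct products, any family of separating homomorphisms fed into Lemma~\ref{l:gratzer} produces a $\m{D}\in\Q$, so it suffices to verify hypotheses (a) and (b) of that lemma. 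These are symmetric in $\m{B}$ and $\m{C}$, so I would treat only (a): for distinct $x,y\in B$, build a $\m{D}_B^{xy}\in\Q$ together with homomorphisms $\ps_B^{xy}\colon\m{B}\to\m{D}_B^{xy}$ and $\ps_C^{xy}\colon\m{C}\to\m{D}_B^{xy}$ satisfying $\ps_B^{xy}\f_B=\ps_C^{xy}\f_C$ and $\ps_B^{xy}(x)\neq\ps_B^{xy}(y)$.

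For such a pair, first choose (by Zorn's Lemma, using that meets and joins of chains in $\Conq\m{B}$ agree with those in $\Con\m{B}$) a congruence $\Theta_B\in\Conq\m{B}$ maximal with respect to $\lan x,y\ran\notin\Theta_B$; such a $\Theta_B$ is completely meet-irreducible, so $\m{B}/\Theta_B\in\Qrsi$ and hence $\m{B}/\Theta_B\in\K$ by (ii). Put $\The:=\Theta_B\cap A^2\in\Conq\m{A}$. The map $[a]_{\The}\mapsto[a]_{\Theta_B}$ embeds $\m{A}/\The$ into $\m{B}/\Theta_B$, so $\m{A}/\The\in\K$ by closure under subalgebras (i). Now invoke (iii) with $\m{C}$ as ambient algebra and $\m{A}\leq\m{C}$ as subalgebra: since $\The\in\Conq\m{A}$ and $\m{A}/\The\in\K$, there is a $\Phi\in\Conq\m{C}$ with $\Phi\cap A^2=\The$ and $\m{C}/\Phi\in\K$. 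The equality $\Phi\cap A^2=\The$ ensures that $[a]_{\The}\mapsto[a]_{\Phi}$ embeds $\m{A}/\The$ into $\m{C}/\Phi$, so the induced embeddings furnish a doubly injective span in $\K$ with underlying algebras $\m{A}/\The$, $\m{B}/\Theta_B$, $\m{C}/\Phi$. By (iv) this reduced span has an amalgam $\lan\m{D}_B^{xy},\si_B,\si_C\ran$ in $\Q$.

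It then remains to assemble the maps and check the two conditions of Lemma~\ref{l:gratzer}. I would take $\ps_B^{xy}$ to be the composite of the canonical surjection $\m{B}\to\m{B}/\Theta_B$ with $\si_B$, and $\ps_C^{xy}$ the composite of $\m{C}\to\m{C}/\Phi$ with $\si_C$. Commutativity $\ps_B^{xy}\f_B=\ps_C^{xy}\f_C$ holds because both sides send $a\in A$ to $\si_B([a]_{\Theta_B})=\si_C([a]_{\Phi})$, the two being equal since $\si_B,\si_C$ commute over $\m{A}/\The$. For separation, $\si_B$ is an embedding and $\lan x,y\ran\notin\Theta_B$, whence $\ps_B^{xy}(x)=\si_B([x]_{\Theta_B})\neq\si_B([y]_{\Theta_B})=\ps_B^{xy}(y)$. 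The symmetric construction---starting from a completely meet-irreducible congruence of $\m{C}$ separating distinct $x,y\in C$ and transferring its restriction to $A^2$ over to $\m{B}$ via (iii)---verifies (b). Lemma~\ref{l:gratzer} then delivers an amalgam of the original span inside $\Q$, so $\Q$ has the \prp{AP}.

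I expect the only genuine subtlety to be the precise use of hypothesis (iii): it is exactly what transports the restricted congruence $\The$ from the common subalgebra $\m{A}$ to the opposite factor while keeping its restriction to $A^2$ equal to $\The$. This control is what simultaneously makes the reduced span doubly injective---so that the amalgamation hypothesis (iv) is applicable---and keeps all three reduced algebras inside $\K$ (via (i) and (ii) for the factor being separated). Everything else is routine diagram-chasing through the canonical quotient maps.
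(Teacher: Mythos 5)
Your proposal is correct and follows essentially the same route as the paper's proof: reduce to the Gr\"atzer--Lakser criterion, separate a pair $x,y$ by a maximal (hence completely meet-irreducible) $\Q$-congruence to land in $\Qrsi\subseteq\K$, transport the restricted congruence to the other factor via hypothesis (iii), and amalgamate the resulting span in $\K$ using (iv). The only (welcome) difference is that you explicitly start from an arbitrary doubly injective span in $\Q$ rather than in $\K$, which is what the conclusion actually requires.
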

\begin{proof}
Let  $\lan\m{A},\m{B},\m{C},\f_B,\f_C \ran$ be a doubly injective span in $\K$, assuming, by (i), without loss of generality, that $\m{A}$ is a subalgebra of $\m{B}$ and $\m{C}$, and $\f_B$ and $\f_C$ are inclusion maps. We check condition~(a) of  Lemma~\ref{l:gratzer} for the existence of an amalgam, condition~(b) being completely symmetrical. Consider any distinct $x,y\in B$ and let $\Psi$ be a $\Q$-congruence of $\m{B}$ that is maximal with respect to $\lan x,y\ran\notin \Psi$. Then $\m{B}/\Psi$ is a relatively subdirectly irreducible member of $\Q$ and belongs to $\K$, by (ii). Define $\The:=\Psi \cap A^{2}$. The map $\f'_B$ sending $[a]_\The\in\m{A}/\The$ to $[a]_\Psi$ is an embedding of $\m{A}/\The$ into $\m{B}/\Psi$, so $\m{A}/\The\in\K$, by (i), and $\The\in \Conq\m{A}$. Hence, by (iii), there exists a $\Phi\in\Conq\m{C}$ such that $\Phi \cap A^2 = \The$ and $\m{C}/\Phi\in \K$. Moreover, the map $\f'_C$ sending any $[a]_\The\in\m{A}/\The$ to $[a]_\Phi $ is an embedding of $\m{A}/\The$ into $\m{C}/\Phi$.

So $\lan\m{A}/\The,\m{B}/\Psi ,\m{C}/\Phi,\f'_B,\f'_C\ran$ is a  doubly injective span  of  members of $\K$ and, by (iv), has an amalgam $\lan\m{D}_{xy},\x_B,\x_C \ran$ in $\Q$. We define homomorphisms
\[
\ps^{xy}_B\colon\m{B}\to\m{D}_{xy}; \enspace b \mapsto \x_B([b]_\Psi) \quad\text{and}\quad \ps^{xy}_C\colon\m{C}\to\m{D}_{xy}; \enspace c \mapsto \x_C([c]_\Phi).
\]
Then $\ps^{xy}_B(x)\neq \ps^{xy}_B(y)$ (as $\x_B$ is injective and $[x]_\Psi \neq [y]_\Psi $) and for any $a\in A$,
\begin{align*}
\ps^{xy}_B(\f_B(a))
& =\x_B([a]_\Psi)\\
& =\x_B(\f'_B([a]_\The))\\
& =\x_C(\f'_C([a]_\The))\\
& =\x_C([a]_\Phi)\\
& =\ps^{xy}_C(\f_C(a)). \qedhere
\end{align*}
\end{proof}

We now prove the main result of this section.

\begin{Theorem}\label{t:APmain}
Let $\Q$ be any quasivariety with the $\Q$-congruence extension property such that $\Qrfsi$ is closed under subalgebras. The following are equivalent:
\begin{enumerate}[label=(\arabic*)]
\item	$\Q$ has the amalgamation property.
\item	$\Q$ has the one-sided amalgamation property.
\item	$\Qrfsi$ has the one-sided amalgamation property.
\item	Every doubly injective span in $\Qrfsi$ has an amalgam in $\Qrfsi\times\Qrfsi$.
\item Every doubly injective span in $\Qrfsi$ has an amalgam in $\Q$.
\end{enumerate}
\end{Theorem}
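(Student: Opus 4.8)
The plan is to prove the cycle of implications $(1)\Rightarrow(2)\Rightarrow(3)\Rightarrow(4)\Rightarrow(5)\Rightarrow(1)$, dispatching the easy inclusions first and then reducing the final and most substantial implication $(5)\Rightarrow(1)$ to the already-established Proposition~\ref{p:condition}. The implication $(1)\Rightarrow(2)$ is immediate, since every amalgam is in particular a one-sided amalgam (taking $\ps_B$ to be the embedding guaranteed by the \prp{AP}). For $(2)\Rightarrow(3)$, I would use that $\Qrfsi$ is closed under subalgebras together with the $\Q$-\prp{CEP}: given a doubly injective span in $\Qrfsi$, it is in particular a span in $\Q$, so the \prp{1AP} for $\Q$ supplies a $\m{D}\in\Q$ with $\ps_B$ a homomorphism and $\ps_C$ an embedding; I then need to push $\m{D}$ down into $\Qrfsi$. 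Here Corollary~\ref{c:key} is the tool: since $\ps_C[\m{C}]$ is a subalgebra of $\m{D}$ lying in $\Qrfsi$, there is a surjection $\f\colon\m{D}\to\m{C'}$ with $\m{C'}\in\Qrfsi$ collapsing nothing on $\ps_C[\m{C}]$, and composing yields a one-sided amalgam in $\Qrfsi$.

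\smallskip
\emph{The implications} $(3)\Rightarrow(4)$ and $(4)\Rightarrow(5)$ are essentially bookkeeping. For $(3)\Rightarrow(4)$, a one-sided amalgam $\lan\m{D},\ps_B,\ps_C\ran$ in $\Qrfsi$ has $\ps_C$ an embedding but $\ps_B$ only a homomorphism; to produce an amalgam in the product class $\Qrfsi\times\Qrfsi$ I would combine the given one-sided amalgam with the symmetric one (swapping the roles of $\m{B}$ and $\m{C}$), using that the \prp{1AP} applied twice yields maps separating points of both $\m{B}$ and $\m{C}$. The target $\m{D}$ is then taken to be the product of the two factors, so that the diagonal-style pairing of the two homomorphisms becomes an embedding on each side — exactly the construction underlying Lemma~\ref{l:gratzer}. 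The step $(4)\Rightarrow(5)$ is trivial once one observes $\Qrfsi\times\Qrfsi\subseteq\Q$, since $\Q$ is closed under direct products.

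\smallskip
\emph{The main work is} $(5)\Rightarrow(1)$, and I would obtain it by verifying the four hypotheses of Proposition~\ref{p:condition} for the class $\K:=\Qrfsi$. Hypothesis~(i), closure under isomorphisms and subalgebras, holds by assumption (closure under subalgebras is given, and $\Qrfsi$ is closed under isomorphisms by definition). Hypothesis~(ii), that every relatively subdirectly irreducible member of $\Q$ lies in $\K$, holds because $\Qrsi\subseteq\Qrfsi$. Hypothesis~(iv) is precisely statement~(5). The crux is hypothesis~(iii): for any $\m{B}\in\Q$, subalgebra $\m{A}$ of $\m{B}$, and $\The\in\Conq\m{A}$ with $\m{A}/\The\in\Qrfsi$, I must find $\Phi\in\Conq\m{B}$ with $\Phi\cap A^2=\The$ and $\m{B}/\Phi\in\Qrfsi$. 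This is where the $\Q$-\prp{CEP} enters decisively: the \prp{CEP} gives \emph{some} $\Psi\in\Conq\m{B}$ with $\Psi\cap A^2=\The$, whence $\cgq{\m{B}}(\The)\cap A^2=\The$ by Lemma~\ref{l:cepbasic}(b); since $\m{A}/\The\in\Qrfsi$ means $\The$ is meet-irreducible in $\Conq\m{A}$, Lemma~\ref{l:key} upgrades this to a \emph{meet-irreducible} $\Phi\in\Conq\m{B}$ with $\Phi\cap A^2=\The$, and meet-irreducibility of $\Phi$ is exactly $\m{B}/\Phi\in\Qrfsi$.

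\smallskip
\emph{The step I expect to be the main obstacle} is $(3)\Rightarrow(4)$: extracting a genuine two-sided amalgam in $\Qrfsi\times\Qrfsi$ from one-sided amalgams requires care, because the \prp{1AP} in $\Qrfsi$ produces for each side a homomorphism that need not be injective, and one must organize the separate point-separating data (as in Lemma~\ref{l:gratzer}) into a single product algebra while checking that the resulting maps $\ps_B,\ps_C$ are both embeddings and that commutativity over $\m{A}$ is preserved. The remaining implications, once the reduction to Proposition~\ref{p:condition} and the Lemma~\ref{l:key} argument for hypothesis~(iii) are in place, are routine applications of results already proved in the excerpt.
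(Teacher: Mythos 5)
Your proposal is correct and follows essentially the same route as the paper: the same cycle $(1)\Rightarrow(2)\Rightarrow(3)\Rightarrow(4)\Rightarrow(5)\Rightarrow(1)$, with Corollary~\ref{c:key} for $(2)\Rightarrow(3)$, the symmetric product construction for $(3)\Rightarrow(4)$, and Proposition~\ref{p:condition} with Lemma~\ref{l:key} (via the $\Q$-\prp{CEP} and Lemma~\ref{l:cepbasic}(b)) for $(5)\Rightarrow(1)$. The step you flag as the main obstacle, $(3)\Rightarrow(4)$, is in fact handled exactly as you sketch and poses no difficulty.
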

\begin{proof}
(1)\,$\Rightarrow$\,(2). Immediate.

(2)\,$\Rightarrow$\,(3). Suppose that $\Q$ has the \prp{1AP} and let  $\lan\m{A},\m{B},\m{C},\f_B,\f_C \ran$ be a doubly injective span in $\Qrfsi$. By assumption, there exist a $\m{D}'\in\Q$,  a homomorphism $\ps'_B\colon\m{B}\to\m{D}'$, and an embedding $\ps'_C\colon\m{C}\to\m{D}'$ such that $\ps'_B \f_B=\ps'_C\f_C$. We may assume without loss of generality that $\m{C}$ is a subalgebra of $\m{D}'$. By Corollary~\ref{c:key}, there exist a $\m{D}\in\Qrfsi$ and a surjective homomorphism $\x\colon\m{D}'\to\m{D}$ such that $\ker(\x)\cap C^2 = \De_C$. Hence $\ps_B:=\x\ps'_B$ is a homomorphism from $\m{B}$ to $\m{D}$, and $\ps_C:=\x\ps'_C$ is an embedding of $\m{C}$ into $\m{D}$ satisfying $\ps_B \f_B=\x\ps'_B\f_B=\x\ps'_C\f_C=\ps_C\f_C$.

(3)\,$\Rightarrow$\,(4).  Suppose that $\Qrfsi$ has the \prp{1AP} and let $\lan\m{A},\m{B},\m{C},\f_B,\f_C \ran$ be any doubly injective span in $\Qrfsi$. By assumption, there exist a $\m{D}_C\in\Qrfsi$,  a homomorphism $\ps^C_B\colon\m{B}\to\m{D}_C$, and an embedding $\ps^C_C\colon\m{C}\to\m{D}_C$ such that $\ps^C_B \f_B=\ps^C_C\f_C$. However, $\lan\m{A},\m{C},\m{B},\f_C,\f_B \ran$ is also a doubly injective span in $\Qrfsi$, so there exist a $\m{D}_B\in\Qrfsi$,  a homomorphism $\ps^B_C\colon\m{C}\to\m{D}_B$, and an embedding $\ps^B_B\colon\m{B}\to\m{D}_B$ such that $\ps^B_C \f_C=\ps^B_B\f_B$. Hence $\lan\m{A},\m{B},\m{C},\f_B,\f_C \ran$ has an amalgam $\lan \m{D},\ps_B,\ps_C\ran$, where $\m{D} =\m{D}_B\times\m{D}_C\in\Qrfsi\times\Qrfsi$, $\ps_B$ maps $x\in B$ to $\lan\ps^B_B(x),\ps^C_B(x)\ran$, and $\ps_C$ maps $x\in C$ to $\lan\ps^B_C(x),\ps^C_C(x)\ran$.

(4)\,$\Rightarrow$\,(5). Immediate.

(5)\,$\Rightarrow$\,(1).  Suppose that every doubly injective span in $\Qrfsi$ has an amalgam in  $\Q$. Since $\Qrsi\subseteq\Qrfsi$ and, by assumption, $\Qrfsi$ is closed under subalgebras, it suffices to observe that condition~(iii) of Proposition~\ref{p:condition} is satisfied for $\K=\Qrfsi$ by Lemma~\ref{l:key}.
\end{proof}

Since a variety $\V$ has the amalgamation property if and only if every doubly injective span of finitely generated algebras in $\V$ has an amalgam in $\V$~\cite{Gra75}, small adjustments to the proofs in this section yield the following variant of Theorem~\ref{t:APmain} for varieties.

\begin{Corollary}\label{c:APvar}
Let $\V$ be any variety with the congruence extension property such that $\Vfsi$ is closed under subalgebras. The following are equivalent:
\begin{enumerate}[label=(\arabic*)]
\item	$\V$ has the amalgamation property.
\item	$\V$ has the one-sided amalgamation property.
\item	$\Vfsi$ has the one-sided amalgamation property.
\item	Every doubly injective span of finitely generated algebras in $\Vfsi$ has an amalgam in $\Vfsi\times\Vfsi$.
\item Every doubly injective span of finitely generated algebras in $\Vfsi$ has an amalgam in $\V$.
\end{enumerate}
\end{Corollary}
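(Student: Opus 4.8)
The plan is to obtain almost all of the equivalences by specializing Theorem~\ref{t:APmain} to the variety $\V$, and then to supply the single genuinely new implication, (5)\,$\Rightarrow$\,(1), by pairing the reduction of the \prp{AP} to finitely generated spans~\cite{Gra75} with a finite-generation refinement of the argument behind Proposition~\ref{p:condition}. Throughout I would use that for a variety $\V$ we have $\Q=\V$, $\Qrfsi=\Vfsi$, and the $\Q$-\prp{CEP} is the ordinary \prp{CEP}, so that Theorem~\ref{t:APmain} applies directly under the stated hypotheses (\prp{CEP} and closure of $\Vfsi$ under subalgebras).

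First I would dispose of the implications that do not use the finite-generation hypothesis. Conditions (1), (2), and (3) here are verbatim conditions (1), (2), and (3) of Theorem~\ref{t:APmain} (with no cardinality or generation restriction), so their mutual equivalence is immediate. For (3)\,$\Rightarrow$\,(4): Theorem~\ref{t:APmain} yields that (3) implies condition (4) of that theorem, namely that \emph{every} doubly injective span in $\Vfsi$ has an amalgam in $\Vfsi\times\Vfsi$; restricting to spans of finitely generated algebras gives (4) here. The implication (4)\,$\Rightarrow$\,(5) is immediate, since $\Vfsi\times\Vfsi\subseteq\V$ and hence any amalgam in $\Vfsi\times\Vfsi$ is in particular an amalgam in $\V$. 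This closes the cycle up to condition (5).

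The substance lies in (5)\,$\Rightarrow$\,(1). By~\cite{Gra75} it suffices to show that every doubly injective span $\lan\m{A},\m{B},\m{C},\f_B,\f_C\ran$ of \emph{finitely generated} algebras in $\V$ has an amalgam in $\V$. I would re-run the proof of Proposition~\ref{p:condition} (equivalently, that of (5)\,$\Rightarrow$\,(1) in Theorem~\ref{t:APmain}) with $\K=\Vfsi$, tracking finite generation. Assuming $\m{A}\leq\m{B}$ and $\m{A}\leq\m{C}$, I verify condition~(a) of Lemma~\ref{l:gratzer} (condition~(b) being symmetric): for distinct $x,y\in B$ choose $\Psi\in\Con\m{B}$ maximal with $\lan x,y\ran\notin\Psi$, so $\m{B}/\Psi\in\Vsi\subseteq\Vfsi$. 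Setting $\The:=\Psi\cap A^2$, the algebra $\m{A}/\The$ embeds into $\m{B}/\Psi$ and hence lies in $\Vfsi$ (closed under subalgebras); using the \prp{CEP} together with Lemma~\ref{l:cepbasic}(b) and Lemma~\ref{l:key}, there is a $\Phi\in\Con\m{C}$ with $\Phi\cap A^2=\The$ and $\m{C}/\Phi\in\Vfsi$, into which $\m{A}/\The$ also embeds. The crucial new observation is that $\m{B}/\Psi$, $\m{C}/\Phi$, and $\m{A}/\The$, being quotients of the finitely generated algebras $\m{B}$, $\m{C}$, and $\m{A}$, are themselves finitely generated; thus the resulting span is a doubly injective span of finitely generated algebras in $\Vfsi$, and (5) supplies an amalgam $\lan\m{D}_{xy},\x_B,\x_C\ran$ in $\V$, providing the data required by Lemma~\ref{l:gratzer}(a). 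Applying Lemma~\ref{l:gratzer} then yields an amalgam of the original span whose carrier is a product of such $\m{D}_{xy}$'s, which lies in $\V$ by closure under products.

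The main obstacle is bookkeeping rather than conceptual: I must confirm that every algebra feeding into condition~(iv) of Proposition~\ref{p:condition} is finitely generated, so that only instances of the weaker hypothesis (5) are invoked, and that no step covertly requires amalgamating an infinitely generated span. The one point worth flagging is that the product $\m{D}$ delivered by Lemma~\ref{l:gratzer} ranges over all pairs of distinct elements of $\m{B}$ and $\m{C}$ and will in general be infinitely generated; this is harmless, since Grätzer's criterion asks only that the amalgam of a finitely generated span lie in $\V$, not that it be finitely generated, and closure of $\V$ under arbitrary products secures exactly this.
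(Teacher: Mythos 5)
Your proposal is correct and follows exactly the route the paper intends: the paper's "proof" of Corollary~\ref{c:APvar} consists precisely of the remark that Gr\"atzer's reduction of the \prp{AP} to finitely generated spans, combined with "small adjustments to the proofs in this section," yields the result, and your write-up supplies those adjustments faithfully (quotients of finitely generated algebras are finitely generated, so the spans fed into hypothesis (5) via Lemma~\ref{l:gratzer} and the argument of Proposition~\ref{p:condition} are of the required form, while the final product amalgam need not be). No gaps.
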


\begin{Remark}
The \prp{1AP} cannot be replaced by the \prp{AP} in condition (3) of Theorem~\ref{t:APmain} or Corollary~\ref{c:APvar}. For example, the variety $\cls{DL}$ of distributive lattices is congruence-distributive and has the \prp{CEP} and \prp{AP}, but $\cls{DL}_{_\text{FSI}}$, which up to isomorphism contains only the trivial lattice and two-element lattice, does not have the \prp{AP}. Just observe that any amalgam of a doubly injective span embedding the trivial lattice into the two-element lattice in two different ways must have at least three elements and hence cannot belong to $\cls{DL}_{_\text{FSI}}$.
\end{Remark}

The following result is useful for the study of joins of varieties with the \prp{AP} (see, e.g., the proof of Theorem~\ref{t:bltheorem}). Recall that a subalgebra $\m{A}$ of an algebra $\m{B}$ is a {\em retract} of $\m{B}$ if there exists a homomorphism $\ps\colon\m{B}\to\m{A}$ such that $\ps$ is the identity on $A$. 

\begin{Proposition}\label{p:joinsap}
Let $\V_{1}$ and $\V_{2}$ be varieties of the same signature such that $\V_1\jn\V_2$ is congruence-distributive, and suppose that $\V_{1}$ and $\V_{2}$ have the \prp{AP} and \prp{CEP}, $(\V_1)_{_\text{FSI}}$ and $(\V_2)_{_\text{FSI}}$ are closed under subalgebras, and whenever $\m{A}\in(\V_1)_{_\text{FSI}}\cap(\V_2)_{_\text{FSI}}$ is a subalgebra of $\m{B}\in(\V_1)_{_\text{FSI}}\cup(\V_2)_{_\text{FSI}}$, it is a retract of $\m{B}$.  Then $\V_1\jn\V_2$ has the amalgamation property.
\end{Proposition}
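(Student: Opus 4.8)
The plan is to apply Theorem~\ref{t:APmain} to the congruence-distributive variety $\V := \V_1 \jn \V_2$, for which $\Vrfsi = \Vfsi$. To invoke that theorem I must verify that $\V$ has the \prp{CEP}, that $\Vfsi$ is closed under subalgebras, and then check its condition~(3), namely that $\Vfsi$ has the \prp{1AP}. The linchpin of the whole argument is the structural identity
\[
\Vfsi = (\V_1)_{_\text{FSI}} \cup (\V_2)_{_\text{FSI}}.
\]
The inclusion $\supseteq$ is immediate, since finite subdirect irreducibility is an absolute property and each $\V_i\subseteq\V$. For $\subseteq$, I would first note that $\V_1\cup\V_2$ is closed under subalgebras and ultraproducts: each $\V_i$ is a variety, and an ultraproduct of members of $\V_1\cup\V_2$ lies in whichever $\V_i$ is selected by the ultrafilter. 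Hence J\'onsson's Lemma, applied to the congruence-distributive variety $\V=\var(\V_1\cup\V_2)$, gives $\Vsi\subseteq\V_1\cup\V_2$. Then, for $\m{A}\in\Vfsi$, the Relativized J\'onsson Lemma~\cite{CD90}*{Lemma~1.5} embeds $\m{A}$ into an ultraproduct of members of $\Vsi\subseteq\V_1\cup\V_2$, whence $\m{A}\in\V_1\cup\V_2$ and so $\m{A}\in(\V_1)_{_\text{FSI}}\cup(\V_2)_{_\text{FSI}}$.

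Two of the required hypotheses then follow quickly. Closure of $\Vfsi$ under subalgebras is immediate from the identity above together with the assumed closure of $(\V_1)_{_\text{FSI}}$ and $(\V_2)_{_\text{FSI}}$ under subalgebras. For the \prp{CEP}, observe that every member of $\Vfsi=(\V_1)_{_\text{FSI}}\cup(\V_2)_{_\text{FSI}}$ lies in $\V_1$ or $\V_2$, each of which has the \prp{CEP}, so $\Vfsi$ has the \prp{CEP}; since $\V$ is congruence-distributive, Corollary~\ref{c:CEPvar} then upgrades this to the \prp{CEP} for $\V$ itself.

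It remains to verify that $\Vfsi$ has the \prp{1AP}. Let $\lan\m{A},\m{B},\m{C},\f_B,\f_C\ran$ be a doubly injective span in $\Vfsi$, taken without loss of generality to be inclusions with $\m{A}$ a common subalgebra of $\m{B},\m{C}\in(\V_1)_{_\text{FSI}}\cup(\V_2)_{_\text{FSI}}$. I would split into two cases. If $\m{B},\m{C}\in(\V_i)_{_\text{FSI}}$ for a common $i$, then closure under subalgebras places the entire span in $(\V_i)_{_\text{FSI}}$; since $\V_i$ has the \prp{CEP} and \prp{AP} and $(\V_i)_{_\text{FSI}}$ is closed under subalgebras, Theorem~\ref{t:APmain} applied to $\V_i$ gives that $(\V_i)_{_\text{FSI}}$ has the \prp{1AP}, yielding the required witness in $\Vfsi$. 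Otherwise $\m{B}$ and $\m{C}$ lie in distinct classes, say $\m{B}\in(\V_1)_{_\text{FSI}}$ and $\m{C}\in(\V_2)_{_\text{FSI}}$, and then $\m{A}$, being a subalgebra of each, lies in $(\V_1)_{_\text{FSI}}\cap(\V_2)_{_\text{FSI}}$. By the retract hypothesis there is a homomorphism $r\colon\m{B}\to\m{A}$ fixing $A$ pointwise; setting $\m{D}:=\m{C}$, $\ps_C:=\mathrm{id}_{\m{C}}$ (an embedding), and $\ps_B:=\f_C\circ r$, one checks directly that $\ps_B\f_B=\ps_C\f_C$, giving a \prp{1AP} witness in $\Vfsi$. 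With the \prp{1AP} of $\Vfsi$ in hand, Theorem~\ref{t:APmain} (implication from condition~(3) to condition~(1)) shows that $\V$ has the \prp{AP}.

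I expect the main obstacle to be the structural identity $\Vfsi=(\V_1)_{_\text{FSI}}\cup(\V_2)_{_\text{FSI}}$: selecting the correct form of J\'onsson's Lemma and carrying out the ultraproduct and closure bookkeeping for $\V_1\cup\V_2$ is the delicate part, and it is here that congruence-distributivity of $\V_1\jn\V_2$ is genuinely used. Once that identity is secured, the case analysis and the retract construction are routine.
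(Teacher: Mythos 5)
Your proposal is correct and follows essentially the same route as the paper: establish $(\V_1\jn\V_2)_{_\text{FSI}}=(\V_1)_{_\text{FSI}}\cup(\V_2)_{_\text{FSI}}$ via J\'onsson's Lemma and the Relativized J\'onsson Lemma, reduce to the mixed case (handling the one-variety case via the \prp{AP} of $\V_i$), use the retract hypothesis to produce a \prp{1AP} witness, and conclude by Theorem~\ref{t:APmain}. Your version merely spells out more explicitly the verification of that theorem's hypotheses (the \prp{CEP} of $\V_1\jn\V_2$ and closure of its FSI class under subalgebras), which the paper leaves implicit.
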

\begin{proof}
Note first that, since $\V_1\cup\V_2$ is a positive universal class, J{\'o}nsson's Lemma~\cite{Jon67} yields $(\V_1\jn\V_2)_{_\text{SI}}\subseteq\V_1\cup\V_2$ and hence $(\V_1\jn\V_2)_{_\text{SI}} =  (\V_1)_{_\text{SI}}\cup(\V_2)_{_\text{SI}}$. However, by the Relativized J{\'o}nsson Lemma~\cite{CD90}*{Lemma~1.5}, every finitely subdirectly irreducible member of a variety embeds into an ultraproduct of its subdirectly irreducible members. So  also $(\V_1\jn\V_2)_{_\text{FSI}}\subseteq\V_1\cup\V_2$, and $(\V_1\jn\V_2)_{_\text{FSI}} =  (\V_1)_{_\text{FSI}}\cup(\V_2)_{_\text{FSI}}$. Now consider any doubly-injective span $\lan\m{A},\m{B},\m{C},\f_B,\f_C \ran$ in $(\V_1\jn\V_2)_{_\text{FSI}}$. Since $\V_1$ and $\V_2$ have the \prp{AP}, we may assume that $\m{A}\in(\V_1)_{_\text{FSI}}\cap(\V_2)_{_\text{FSI}}$ is a subalgebra of $\m{B},\m{C}\in(\V_1)_{_\text{FSI}}\cup(\V_2)_{_\text{FSI}}$. By assumption, there exists a homomorphism $\ps_B\colon\m{B}\to\m{C}$ (since $\m{A}$ is a subalgebra of $\m{C}$) such that $\ps_B$ is the identity on $A$. Let $\ps_C$ be the identity map on $\m{C}$. Clearly, $\ps_B\f_B=\ps_C\f_C$. Hence $(\V_1\jn\V_2)_{_\text{FSI}}$ has the \prp{1AP} and so  $\V_1\jn\V_2$  has the \prp{AP}, by Theorem~\ref{t:APmain}.
\end{proof}

Let us conclude this section by remarking that by considering  completely meet-irreducible $\Q$-congruences in the proof of Theorem~\ref{t:APmain}, we obtain the same result with $\Qrfsi$ replaced by the class $\Qrsi^+$ of trivial or $\Q$-subdirectly irreducible members of $\Q$. In particular, we obtain~\cite{GL71}*{Theorem~3}, which states that a variety $\V$ with the \prp{CEP} such that the class $\Vsi^+$ of trivial or subdirectly irreducible members of $\V$ is closed under subalgebras has the \prp{AP} if and only if every doubly injective span in $\Vsi^+$ has an amalgam in $\V$. Note, however, that while the property that $\Qrfsi$ is closed under subalgebras follows from the existence of equationally definable relative principal congruence meets (satisfied by many quasivarieties serving as algebraic semantics for non-classical logics), a similarly general condition guaranteeing closure under subalgebras is not known for $\Qrsi^+$.


\section{Transferable Injections and Strong Amalgamation}\label{s:further}

We first establish a generalization for classes of similar algebras closed under subalgebras of the well-known fact that a variety has the \prp{TIP} if and only if it has the \prp{CEP} and \prp{AP}~\cite{Bac72}*{Lemma~1.7}. 

\begin{Proposition}\label{p:tipapcep}
Let $\K$ be a class of similar algebras that is closed under subalgebras. Then $\K$ has the transferable injections property if and only if it has the one-sided amalgamation property and extension property.
\end{Proposition}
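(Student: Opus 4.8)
The forward direction requires no hypothesis on $\K$ and is immediate: every doubly injective span and every injective-surjective span is in particular an injective span, so both the \prp{1AP} and the \prp{EP} are obtained by restricting the \prp{TIP} to these special shapes of span. The substance of the proposition therefore lies entirely in the converse.

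For the converse, suppose $\K$ is closed under subalgebras and has the \prp{1AP} and \prp{EP}. The plan is to take an arbitrary injective span $\lan\m{A},\m{B},\m{C},\f_B,\f_C\ran$ in $\K$ (so $\f_B$ is an embedding and $\f_C$ an arbitrary homomorphism) and factor $\f_C$ through its image, thereby reducing the problem to a single application of the \prp{EP} followed by a single application of the \prp{1AP}. Let $\m{C}'$ denote the subalgebra of $\m{C}$ with universe $\f_C[A]$; since $\K$ is closed under subalgebras, $\m{C}'\in\K$. Writing $\iota\colon\m{C}'\to\m{C}$ for the inclusion and $\f'_C\colon\m{A}\to\m{C}'$ for the corestriction of $\f_C$, we have the factorization $\f_C=\iota\f'_C$ in which $\f'_C$ is surjective and $\iota$ is an embedding.

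First I would apply the \prp{EP} to the injective-surjective span $\lan\m{A},\m{B},\m{C}',\f_B,\f'_C\ran$, obtaining a $\m{E}\in\K$, a homomorphism $\a_B\colon\m{B}\to\m{E}$, and an embedding $\a_{C'}\colon\m{C}'\to\m{E}$ with $\a_B\f_B=\a_{C'}\f'_C$. Next I would apply the \prp{1AP} to the doubly injective span $\lan\m{C}',\m{E},\m{C},\a_{C'},\iota\ran$ (here both $\a_{C'}$ and $\iota$ are embeddings, so the span is genuinely doubly injective), obtaining a $\m{D}\in\K$, a homomorphism $\b_E\colon\m{E}\to\m{D}$, and an embedding $\b_C\colon\m{C}\to\m{D}$ with $\b_E\a_{C'}=\b_C\iota$. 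Setting $\ps_B:=\b_E\a_B$ and $\ps_C:=\b_C$ then gives a homomorphism $\ps_B\colon\m{B}\to\m{D}$ and an embedding $\ps_C\colon\m{C}\to\m{D}$, and a short diagram chase yields $\ps_B\f_B=\b_E\a_B\f_B=\b_E\a_{C'}\f'_C=\b_C\iota\f'_C=\b_C\f_C=\ps_C\f_C$, as required for the \prp{TIP}.

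There is no genuine obstacle here, only bookkeeping: the delicate point is to track at each stage which maps are embeddings and which are merely homomorphisms, so that the two auxiliary spans really do have the shapes demanded by the \prp{EP} and \prp{1AP}, and so that in the end $\ps_C=\b_C$ is an embedding while $\ps_B=\b_E\a_B$ is only a homomorphism. The closure of $\K$ under subalgebras enters exactly once — to place the image $\m{C}'$ inside $\K$ — and this is precisely what allows the factorization of $\f_C$ to remain within the class.
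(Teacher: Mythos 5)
Your proof is correct and is essentially identical to the paper's: both factor $\f_C$ through its image $\m{C}'=\f_C[\m{A}]$ (using closure under subalgebras), apply the \prp{EP} to the resulting injective-surjective span, and then apply the \prp{1AP} to the doubly injective span formed by the two embeddings of $\m{C}'$ into the \prp{EP}-witness and into $\m{C}$. The bookkeeping of which maps are embeddings matches the paper's argument exactly.
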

\begin{proof}
The left-to-right direction is immediate. For the converse, suppose that $\K$ has the \prp{1AP} and \prp{EP} and consider any injective span $\lan\m{A},\m{B},\m{C},\f_B,\f_C \ran$ in $\K$. Let $\m{C}':=\f_C[\m{A}]$. By assumption, $\m{C}'\in\K$. Moreover, $\f_C\colon\m{A}\to\m{C}'$ is surjective, so, since $\K$ has the \prp{EP}, there exist a $\m{D}'\in \K$, a homomorphism $\ps'_B\colon\m{B}\to\m{D}'$, and an embedding $\ps'_C\colon\m{C}'\to\m{D}'$ such that $\ps'_B \f_B=\ps'_C\f_C$. Let $\nu\colon\m{C}'\to\m{C}$ be the inclusion map. Since $\K$ has the \prp{1AP}, there exist for the doubly injective span $\lan\m{C}',\m{D}',\m{C},\ps'_C,\nu\ran$, a $\m{D}\in \K$, a homomorphism $\x\colon\m{D}'\to\m{D}$, and an embedding $\ps_C\colon\m{C}\to\m{D}$ such that $\x\ps'_C=\ps_C\nu$. Let $\ps_B:=\x\ps'_B$. Then $\ps_B \f_B = \x\ps'_B\f_B=\x\ps'_C\f_C=\ps_C\nu\f_C=\ps_C\f_C$. Hence $\K$ has the \prp{TIP}.
\end{proof}

Combining now Proposition~\ref{p:tipapcep} with our earlier results for the \prp{1AP} and \prp{EP}, we are able to transfer results for the \prp{TIP} from a quasivariety to the class of its relatively finitely subdirectly irreducible members, and back again.

\begin{Lemma}\label{l:tipfsi}
If a quasivariety $\Q$ has the transferable injections property, then $\Qrfsi$ has the transferable injections property.
\end{Lemma}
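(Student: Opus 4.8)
The plan is to reduce to the transferable injections property that is already available in $\Q$, and then push the resulting diagram down into $\Qrfsi$ by means of Corollary~\ref{c:key} --- following exactly the pattern of the step (2)$\Rightarrow$(3) in the proof of Theorem~\ref{t:APmain}, but applied to injective spans rather than doubly injective ones.

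First I would take an arbitrary injective span $\lan\m{A},\m{B},\m{C},\f_B,\f_C\ran$ in $\Qrfsi$. Since $\Qrfsi\subseteq\Q$, this is also an injective span in $\Q$, and as $\Q$ has the \prp{TIP} there exist a $\m{D}'\in\Q$, a homomorphism $\ps'_B\colon\m{B}\to\m{D}'$, and an embedding $\ps'_C\colon\m{C}\to\m{D}'$ satisfying $\ps'_B\f_B=\ps'_C\f_C$. Identifying $\m{C}$ with its image under $\ps'_C$, I may assume without loss of generality that $\m{C}$ is a subalgebra of $\m{D}'$.

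Next, since $\m{C}\in\Qrfsi$ is a subalgebra of $\m{D}'\in\Q$, Corollary~\ref{c:key} yields a $\m{D}\in\Qrfsi$ together with a surjective homomorphism $\x\colon\m{D}'\to\m{D}$ such that $\ker(\x)\cap C^2=\De_C$. I then set $\ps_B:=\x\ps'_B$ and $\ps_C:=\x\ps'_C$. The map $\ps_B$ is a homomorphism from $\m{B}$ to $\m{D}$, and $\ps_B\f_B=\x\ps'_B\f_B=\x\ps'_C\f_C=\ps_C\f_C$, so the diagram commutes and $\m{D}\in\Qrfsi$.

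The only point requiring care is that $\ps_C$ is an embedding, and this is precisely what the condition $\ker(\x)\cap C^2=\De_C$ guarantees: for $c_1,c_2\in C$ we have $\ps_C(c_1)=\ps_C(c_2)$ if and only if $\lan c_1,c_2\ran\in\ker(\x)\cap C^2=\De_C$, i.e. if and only if $c_1=c_2$. I expect no serious obstacle here; the entire content of the argument is the transfer of a homomorphism--embedding pair from $\Q$ into $\Qrfsi$ via Corollary~\ref{c:key}, and the one substantive step is verifying that the injectivity of $\ps_C$ survives composition with the quotient map $\x$, which the choice of $\m{D}$ in Corollary~\ref{c:key} is designed to ensure. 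Note in particular that no closure of $\Qrfsi$ under subalgebras is needed, consistent with the hypotheses of the lemma.
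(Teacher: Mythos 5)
Your argument is correct and follows exactly the paper's own proof: apply the \prp{TIP} of $\Q$ to the given injective span, identify $\m{C}$ with its image so that it becomes a subalgebra of the amalgamating algebra, and then use Corollary~\ref{c:key} to obtain a surjection onto a member of $\Qrfsi$ whose kernel restricts trivially to $C^2$, so that composing preserves the embedding. Your closing observation that no closure of $\Qrfsi$ under subalgebras is needed is also consistent with the paper's statement of the lemma.
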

\begin{proof}
Suppose that $\Q$ has the \prp{TIP} and let $\lan\m{A},\m{B},\m{C},\f_B,\f_C \ran$ be an injective span in $\Qrfsi$. Then there exist a $\m{D}\in \Q$, a homomorphism $\ps_B\colon\m{B}\to\m{D}$, and an embedding $\ps_C\colon\m{C}\to\m{D}$ such that $\ps_B \f_B=\ps_C\f_C$. Without loss of generality, we may assume that $\m{C}$ is a subalgebra of $\m{D}$ and $\ps_C$ is the inclusion map. By Corollary~\ref{c:key}, there exist a $\m{D^*}\in\Qrfsi$ and a surjective homomorphism $\x\colon\m{D}\to\m{D^*}$ such that $\ker(\x)\cap C^2 =\De_C$. Let $\ps^*_B:=\x\ps_B$ and $\ps^*_C:=\x\ps_C$. Then $\ker(\ps^*_C)=\ker(\x)\cap C^2 =\De_C$, so $\ps^*_C$ is an embedding, and $\ps^*_B \f_B=\x\ps_B\f_B=\x\ps_C\f_C=\ps^*_C\f_C$. Hence  $\Qrfsi$ has the \prp{TIP}.
\end{proof}

\begin{Theorem}\label{t:mainTIP}
Let $\Q$ be a relatively congruence-distributive quasivariety such that $\Qrfsi=\Qfsi$ is closed under subalgebras.  Then $\Q$ has the transferable injections property if and only if $\Qrfsi=\Qfsi$ has the transferable injections property.
\end{Theorem}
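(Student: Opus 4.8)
The plan is to reduce Theorem~\ref{t:mainTIP} to the characterization of the \prp{TIP} in terms of the \prp{1AP} and \prp{EP} provided by Proposition~\ref{p:tipapcep}, and then invoke the transfer theorems already established for those two component properties. The key observation is that the hypotheses of Theorem~\ref{t:mainTIP} --- that $\Q$ is relatively congruence-distributive and that $\Qrfsi=\Qfsi$ is closed under subalgebras --- are precisely the hypotheses under which our earlier results give clean transfer statements for both the \prp{EP} (via Theorem~\ref{t:sepcep}) and the \prp{1AP} (via Theorem~\ref{t:APmain}). Since both $\Q$ and $\Qrfsi$ are closed under subalgebras (a quasivariety always is, and $\Qrfsi$ is by hypothesis), Proposition~\ref{p:tipapcep} applies to each of them, letting us freely decompose the \prp{TIP} into the conjunction of the \prp{1AP} and \prp{EP}.

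\emph{Forward direction.} This is immediate from Lemma~\ref{l:tipfsi}: if $\Q$ has the \prp{TIP}, then $\Qrfsi$ has the \prp{TIP} directly, with no appeal to congruence-distributivity needed.

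\emph{Backward direction.} Suppose $\Qrfsi=\Qfsi$ has the \prp{TIP}. Since $\Qrfsi$ is closed under subalgebras, Proposition~\ref{p:tipapcep} gives that $\Qrfsi$ has both the \prp{1AP} and the \prp{EP}. From the \prp{EP} for $\Qrfsi$, Theorem~\ref{t:sepcep} (specifically the equivalence of its conditions, using that $\Q$ is relatively congruence-distributive with $\Qrfsi=\Qfsi$ closed under subalgebras) yields that $\Q$ has the $\Q$-\prp{CEP}, and hence, by the equivalence of conditions (1) and (2) of Theorem~\ref{t:sepcep}, that $\Q$ has the \prp{EP}. With the $\Q$-\prp{CEP} for $\Q$ now in hand, the hypotheses of Theorem~\ref{t:APmain} are met, so from the \prp{1AP} for $\Qrfsi$ we conclude via that theorem (equivalence of conditions (2) and (3)) that $\Q$ has the \prp{1AP}. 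Finally, since $\Q$ is closed under subalgebras and possesses both the \prp{1AP} and the \prp{EP}, Proposition~\ref{p:tipapcep} delivers that $\Q$ has the \prp{TIP}.

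\textbf{The main obstacle} I anticipate is purely bookkeeping rather than conceptual: one must verify that the hypotheses required by Theorem~\ref{t:sepcep} and Theorem~\ref{t:APmain} are genuinely available at the point of application. In particular, Theorem~\ref{t:APmain} requires the $\Q$-\prp{CEP} as a standing hypothesis, so the argument must establish the \prp{EP}/$\Q$-\prp{CEP} transfer \emph{before} invoking the amalgamation transfer --- the order of the two applications is not interchangeable. Once this dependency is respected, the proof is a short chain of citations with no new calculation, and the only care needed is to confirm that closure under subalgebras is invoked for the correct class (for $\Qrfsi$ when applying Proposition~\ref{p:tipapcep} to it, and noting that $\Q$ itself is automatically closed under subalgebras as a quasivariety).
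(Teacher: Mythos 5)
Your proposal is correct and follows essentially the same route as the paper: forward direction via Lemma~\ref{l:tipfsi}, and backward direction by decomposing the \prp{TIP} of $\Qrfsi$ into the \prp{1AP} and \prp{EP} via Proposition~\ref{p:tipapcep}, transferring these to $\Q$ via Theorem~\ref{t:sepcep} and Theorem~\ref{t:APmain} (in that order, as you correctly note the $\Q$-\prp{CEP} must be in hand before invoking the amalgamation transfer), and recombining with Proposition~\ref{p:tipapcep}. Your writeup is just slightly more explicit than the paper's about extracting the \prp{EP} for $\Q$ from the $\Q$-\prp{CEP}.
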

\begin{proof}
The left-to-right direction follows directly from Lemma~\ref{l:tipfsi}. For the converse, suppose that $\Qrfsi$ has the \prp{TIP}. Then $\Qrfsi$ has the \prp{1AP} and the \prp{EP}, by Proposition~\ref{p:tipapcep}, and hence $\Q$ has the $\Q$-\prp{CEP}, by Theorem~\ref{t:sepcep}. Moreover, $\Q$ has the \prp{AP}, by Theorem~\ref{t:APmain}. So $\Q$ has the \prp{TIP}, by Proposition~\ref{p:tipapcep}.
\end{proof}

We now recall a useful characterization of the \prp{SAP} and a transfer theorem for \prp{SE}.

\begin{Theorem}[\cite{Isb1966}]\label{t:isbell}
Let $\Q$ be a quasivariety. Then $\Q$ has the strong amalgamation property if and only if $\Q$ has the amalgamation property and surjective epimorphisms.
\end{Theorem}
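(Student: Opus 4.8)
The plan is to treat the two directions separately, using throughout the elementary reduction that a homomorphism $\f\colon\m{A}\to\m{B}$ is an epimorphism in $\Q$ exactly when the inclusion of its image $\f[\m{A}]$ into $\m{B}$ is one; thus $\Q$ has \prp{SE} if and only if no inclusion $\m{A}\hookrightarrow\m{B}$ of a proper subalgebra is an epimorphism. A strong amalgam is in particular an amalgam, so \prp{SAP} gives the \prp{AP} immediately. For \prp{SE}, given a proper subalgebra $\m{A}$ of $\m{B}$ I would apply \prp{SAP} to the doubly injective span $\lan\m{A},\m{B},\m{B},\f_B,\f_C\ran$ with both legs the inclusion, obtaining a strong amalgam $\lan\m{D},\ps_1,\ps_2\ran$ with $\ps_1[A]=\ps_1[B]\cap\ps_2[B]$; for $b\in B\setminus A$, strongness and injectivity of $\ps_1$ force $\ps_1(b)\notin\ps_2[B]$, so $\ps_1\neq\ps_2$ while agreeing on $A$, and the inclusion is not an epimorphism. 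Hence \prp{SAP} implies the \prp{AP} and \prp{SE}.

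For the converse I would assume the \prp{AP} and \prp{SE} and work with the \emph{dominion} $\mathrm{dom}_{\m{B}}(\m{A})$ of a subalgebra $\m{A}\le\m{B}$: the set of $b\in B$ on which every pair of homomorphisms out of $\m{B}$ agreeing on $\m{A}$ must agree, equivalently $\{b\in B\mid q_1(b)=q_2(b)\}$ for the coprojections $q_1,q_2\colon\m{B}\to\m{B}\sqcup_{\m{A}}\m{B}$, which are embeddings by the \prp{AP}. The proof reduces to two statements: (1) the inclusion $\m{A}\hookrightarrow\mathrm{dom}_{\m{B}}(\m{A})$ is an epimorphism, whence \prp{SE} forces every dominion to be trivial, $\mathrm{dom}_{\m{B}}(\m{A})=A$; and (2) in the pushout $\m{P}:=\m{B}\sqcup_{\m{A}}\m{C}$ of a doubly injective span, whose coprojections $p_B,p_C$ are embeddings by the \prp{AP} (so that $\m{P}$ is itself an amalgam), every overlap element lies in a dominion. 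Granting these, the pushout amalgam is strong: if $d=p_B(b)=p_C(c)\in p_B[B]\cap p_C[C]$, then $b\in\mathrm{dom}_{\m{B}}(\m{A})=A$ by (2) and (1), so $d\in p_B[A]$; thus $p_B[B]\cap p_C[C]=p_B[A]$ and \prp{SAP} follows. (One could instead separate overlap pairs individually and take a product as in Lemma~\ref{l:gratzer}, but routing through the pushout is cleaner.)

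The engine for (2) — and, as I explain below, for (1) — is an extension technique powered by the \prp{AP}. To prove (2), suppose $d=p_B(b)=p_C(c)$ and set $\m{R}:=\m{B}\sqcup_{\m{A}}\m{B}$ with coprojections $q_1,q_2$ and common restriction $\rho:=q_1|_A=q_2|_A\colon\m{A}\to\m{R}$, an embedding. Since $\lan\m{A},\m{C},\m{R},\iota,\rho\ran$ (with $\iota\colon\m{A}\hookrightarrow\m{C}$) is doubly injective, the \prp{AP} provides an amalgam $\m{W}$ with embeddings $\mu\colon\m{C}\to\m{W}$, $\nu\colon\m{R}\to\m{W}$ and $\mu\iota=\nu\rho$. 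Then $(\nu q_1,\mu)$ and $(\nu q_2,\mu)$ are both cocones on the original span over $\m{A}$; factoring each through $\m{P}$ and evaluating at $d$ yields $\nu q_1(b)=\mu(c)=\nu q_2(b)$, and injectivity of $\nu$ gives $q_1(b)=q_2(b)$, i.e.\ $b\in\mathrm{dom}_{\m{B}}(\m{A})$.

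The main obstacle is statement (1): that the inclusion of $\m{A}$ into its dominion is an epimorphism, granted only the \prp{AP} (in the presence of the \prp{CEP} this would be routine, but that is unavailable here). The difficulty is that the test homomorphisms witnessing a dominion need not be injective, so the \prp{AP} cannot be applied to them directly. I would surmount this by the same extension idea iterated: writing $\m{N}:=\m{D}\sqcup_{\m{A}}\m{D}$ with $\m{D}:=\mathrm{dom}_{\m{B}}(\m{A})$ and coprojections $r_1,r_2$, one amalgamates $\m{B}$ with $\m{N}$ over $\m{D}$ along the \emph{embedding} $r_1$, then amalgamates $\m{B}$ with the result over $\m{D}$ along the embedded copy of $r_2$, each step legitimate because the relevant maps are embeddings; applying the dominion property of $\m{A}$ in $\m{B}$ inside the resulting algebra and cancelling the (injective) comparison maps yields $r_1(d)=r_2(d)$ for every $d\in\m{D}$, which is exactly the assertion that $\m{A}\hookrightarrow\m{D}$ is an epimorphism. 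Carrying out this two-step amalgamation carefully, and checking the cocone factorizations, is the technical heart of the argument.
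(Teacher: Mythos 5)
The paper offers no proof of this statement at all: it is imported verbatim from Isbell's work with a citation, so there is nothing internal to compare your argument against line by line. Judged on its own merits, your proposal is correct and is essentially a reconstruction of the classical dominion-based argument. The easy direction is fine: applying the \prp{SAP} to the span $\lan\m{A},\m{B},\m{B},\iota,\iota\ran$ and using injectivity of $\ps_1$ to see that $\ps_1(b)\notin\ps_2[B]$ for $b\in B\setminus A$ shows that no member of $\Q$ has a proper $\Q$-epic subalgebra, which by Lemma~\ref{l.epic} gives \prp{SE}. For the converse, your three ingredients all hold: the amalgamated coproduct $\m{B}\sqcup_{\m{A}}\m{C}$ exists in any quasivariety and its coprojections are embeddings whenever some amalgam with embeddings exists (factor it through the pushout); your cocone argument for (2) correctly places every overlap element in $\mathrm{dom}_{\m{B}}(\m{A})$; and the two-step amalgamation for (1) does go through --- amalgamating $\m{B}$ with $\m{N}=\m{D}\sqcup_{\m{A}}\m{D}$ over $\m{D}$ along $r_1$ to get $(\m{E},\alpha,\beta)$, then $\m{B}$ with $\m{E}$ over $\m{D}$ along $\beta r_2$ to get $(\m{F},\gamma,\delta)$, one checks that $\delta\alpha$ and $\gamma$ agree on $A$, hence on $D$ by the defining property of the dominion, and cancelling the injective maps $\delta\beta$ yields $r_1(d)=r_2(d)$ for all $d\in D$, so $\m{A}\hookrightarrow\m{D}$ is epic and \prp{SE} forces $D=A$. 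The one point you should make explicit in a final write-up is the existence and universal property of the $\Q$-pushout (the $\Q$-free product with amalgamation), on which both (1) and (2) silently rely; this is standard for quasivarieties but is the only piece of infrastructure not already present in the paper.
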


\begin{Theorem}[\cite{Cam18}*{Theorem~22}]\label{t:campercholi}
Let $\V$ be an arithmetical variety such that $\Vfsi$ is a universal class. Then $\V$ has surjective epimorphisms if and only if $\Vfsi$ has surjective epimorphisms.
\end{Theorem}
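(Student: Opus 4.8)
The plan is to prove both implications by reducing the notion of epimorphism to one that is tested only on subdirectly irreducible algebras. First I would establish a \emph{reduction lemma}: for $\m{A},\m{B}\in\V$ and a homomorphism $\f\colon\m{A}\to\m{B}$, $\f$ is an epimorphism in $\V$ if and only if for every $\m{C}\in\Vsi$ and all $g,h\colon\m{B}\to\m{C}$ with $g\f=h\f$ we have $g=h$. The nontrivial direction uses Birkhoff's subdirect representation theorem: any $\m{C}\in\V$ embeds subdirectly into a product $\prod_i\m{C}_i$ with each $\m{C}_i\in\Vsi$, and two homomorphisms into $\m{C}$ coincide if and only if they coincide after composing with each subdirect projection $\pi_i\colon\m{C}\to\m{C}_i$. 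Since $\Vsi\subseteq\Vfsi\subseteq\V$, the same argument shows that, for $\m{A},\m{B}\in\Vfsi$, being an epimorphism in $\Vfsi$ and being an epimorphism in $\V$ coincide. This immediately yields the direction from left to right: if $\V$ has \prp{SE} and $\f\colon\m{A}\to\m{B}$ is an epimorphism in $\Vfsi$, then $\f$ is an epimorphism in $\V$, hence surjective, so $\Vfsi$ has \prp{SE}. (This direction uses neither arithmeticity nor the universality of $\Vfsi$.)

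For the converse I would argue contrapositively. Suppose $\V$ fails \prp{SE}; then there is a non-surjective epimorphism in $\V$, which we may take to be an inclusion $\m{A}\le\m{B}$ with some $b\in B\setminus A$. The key observation is that \emph{epicness is inherited by surjective images}: if $q\colon\m{B}\to\m{B}/\theta$ is a canonical surjection, then $q[\m{A}]$ is epic in $\m{B}/\theta$, since any $g,h\colon\m{B}/\theta\to\m{C}$ agreeing on $q[A]$ give $gq,hq$ agreeing on $A$, whence $gq=hq$ and, by surjectivity of $q$, $g=h$. Consequently, it suffices to produce a meet-irreducible $\theta\in\Con\m{B}$ with $[b]_\theta\notin\{[a]_\theta\mid a\in A\}$: then $\m{B}/\theta\in\Vfsi$, the induced inclusion $q[\m{A}]\le\m{B}/\theta$ is proper, and it is epic by the reduction lemma, contradicting \prp{SE} for $\Vfsi$. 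Thus the whole problem reduces to separating $b$ from $A$ by a single finitely subdirectly irreducible quotient.

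I expect this separation step to be the main obstacle, and it is where the two standing hypotheses are needed. A naive Zorn's lemma argument produces a congruence maximal with respect to not identifying $b$ with any element of $A$, but congruence-distributivity alone does not force such a maximal congruence to be meet-irreducible, and, worse, it can happen that no single subdirectly irreducible factor of $\m{B}$ separates $b$ from $A$ even though $b\notin A$ (the image of $b$ may coincide with the image of some element of $A$ that varies with the factor). The correct witness is therefore built as an \emph{ultraproduct} of subdirectly irreducible factors of $\m{B}$, which remains in $\Vfsi$ precisely because $\Vfsi$ is a universal class (closed under subalgebras and ultraproducts); arithmeticity enters through the Pixley term and congruence permutability, which control the dominion of $\m{A}$ in $\m{B}$ and make the condition that $b$ lies in this dominion expressible in a first-order way that can be pushed through the ultraproduct. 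Assembling these ingredients to exhibit the required finitely subdirectly irreducible witness is the technical heart of the argument, carried out in~\cite{Cam18}.
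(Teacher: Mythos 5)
First, note that the paper does not prove this statement at all: it is imported verbatim from Campercholi's paper (\cite{Cam18}, Theorem~22) and used as a black box, so there is no in-paper argument to compare yours against. Judged on its own terms, the parts of your sketch that you actually carry out are correct: the reduction of $\V$-epimorphisms to tests against targets in $\Vsi$ via subdirect representations is sound, it does give the left-to-right direction without any hypotheses on $\V$, and your reduction of the converse to a separation statement is also valid (the image of an epic subalgebra under a surjection is epic, universality of $\Vfsi$ gives closure under subalgebras so that $q[\m{A}]$ lands in $\Vfsi$, and meet-irreducibility of $\theta$ puts $\m{B}/\theta$ in $\Vfsi$).

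The gap is that the separation statement --- given a proper $\V$-epic subalgebra $\m{A}\le\m{B}$ and $b\in B\setminus A$, produce a single meet-irreducible $\theta\in\Con\m{B}$ with $[b]_\theta\notin\{[a]_\theta\mid a\in A\}$ --- is essentially the entire content of the theorem, and you do not prove it; you explicitly defer it to the cited source. The one concrete mechanism you gesture at (an ultraproduct of subdirectly irreducible factors of $\m{B}$) does not obviously work as described: if for each $a\in A$ you choose a congruence $\theta_a$ separating $b$ from $a$, then in an ultraproduct of the quotients $\m{B}/\theta_a$ the image of $b$ differs from the image of a fixed $a$ only if $\langle b,a\rangle\notin\theta_{a'}$ for ultrafilter-many indices $a'$, and nothing in your construction arranges this. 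Campercholi's actual argument runs through dominions and primitive positive functions (a Baker--Pixley-type argument exploiting arithmeticity), and none of that machinery is reconstructed here. So while your framing is a reasonable reduction and correctly locates where the hypotheses must enter, the proposal does not constitute a proof of the theorem.
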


Combining Theorems~\ref{t:isbell} and~\ref{t:campercholi} with Corollary~\ref{c:APvar} yields the following result.

\begin{Corollary}\label{c:mainsa}
Let $\V$ be an arithmetical variety with the congruence extension property such that $\Vfsi$ is a universal class. Then $\V$ has the strong amalgamation property if and only if $\Vfsi$ has the one-sided amalgamation property and surjective epimorphisms.
\end{Corollary}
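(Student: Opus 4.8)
The plan is to assemble the statement purely from three results already established or cited in the text: Isbell's characterization of the \prp{SAP} (Theorem~\ref{t:isbell}), the transfer theorem for \prp{SE} (Theorem~\ref{t:campercholi}), and the amalgamation transfer for varieties (Corollary~\ref{c:APvar}). The first task is to confirm that all the hypotheses of these results are available under the standing assumptions. Since $\V$ is arithmetical, it is in particular congruence-distributive, and since $\Vfsi$ is a universal class it is closed under subalgebras; together with the \prp{CEP} for $\V$, this is exactly what Corollary~\ref{c:APvar} requires. Moreover, $\V$ arithmetical with $\Vfsi$ a universal class is precisely the hypothesis of Theorem~\ref{t:campercholi}, and $\V$, being a variety, is in particular a quasivariety to which Isbell's theorem applies.

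For the left-to-right direction, I would first apply Theorem~\ref{t:isbell} to decompose the \prp{SAP} for $\V$ into the conjunction of the \prp{AP} and \prp{SE}. From $\V$ having the \prp{AP}, Corollary~\ref{c:APvar} (equivalence of its conditions (1) and (3)) yields that $\Vfsi$ has the \prp{1AP}; separately, from $\V$ having \prp{SE}, Theorem~\ref{t:campercholi} transfers this downward to give \prp{SE} for $\Vfsi$. Conjoining the two conclusions delivers the required direction.

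For the converse, I would run the same chain in reverse. If $\Vfsi$ has the \prp{1AP}, then Corollary~\ref{c:APvar} lifts this to the \prp{AP} for $\V$; if $\Vfsi$ has \prp{SE}, then Theorem~\ref{t:campercholi} lifts this to \prp{SE} for $\V$. Thus $\V$ has both the \prp{AP} and \prp{SE}, and a final application of Theorem~\ref{t:isbell} yields the \prp{SAP} for $\V$.

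There is no genuine difficulty to overcome here: the mathematical content is carried entirely by the cited theorems, and the argument is a two-sided chain of implications. The only point demanding a moment's attention is aligning the hypotheses correctly---specifically, recognizing that ``$\Vfsi$ is a universal class'' supplies the closure under subalgebras needed to invoke Corollary~\ref{c:APvar}, and that the ``arithmetical'' assumption simultaneously furnishes the congruence-distributivity used there and the full arithmetical hypothesis of Theorem~\ref{t:campercholi}.
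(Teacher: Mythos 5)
Your proof is correct and follows exactly the paper's own argument, which simply combines Theorem~\ref{t:isbell}, Theorem~\ref{t:campercholi}, and Corollary~\ref{c:APvar} in both directions. (One trivial remark: Corollary~\ref{c:APvar} does not actually require congruence-distributivity, only the \prp{CEP} and closure of $\Vfsi$ under subalgebras, so that part of your hypothesis-checking is harmless but unnecessary.)
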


These results do not provide a transfer theorem for the \prp{SAP}, however. To obtain such a theorem, at least in one direction, we make use of a well-known characterization of the \prp{SE} property. Let $\K$ be a class of similar algebras and consider any $\m{A}\in\K$. We say that $\m{A}$ is a \emph{$\K$-epic subalgebra} of $\m{B}\in\K$ if $\m{A}$ is a subalgebra of $\m{B}$ and for every $\m{C}\in\K$ and all homomorphisms $\ps_1,\ps_2\colon \m{B}\to\m{C}$, if $\ps_1$ and $\ps_2$ are equal on their restriction to $A$, then $\ps_1=\ps_2$. It is easy to see that $\m{A}$ is a $\K$-epic subalgebra of $\m{B}$ if and only if $\m{A}$ is a subalgebra of $\m{B}$ and the inclusion homomorphism of $\m{A}$ into $\m{B}$ is an epimorphism in $\K$. The following result is folklore (see, e.g.,~\cite{MorWan2020}*{Lemma~3.1}), but we include a proof for completeness.

\begin{Lemma}\label{l.epic}
Let $\K$ be a class of similar algebras closed under subalgebras. Then $\K$ has surjective epimorphisms if and only if no member of $\K$ has a proper $\K$-epic subalgebra.
\end{Lemma}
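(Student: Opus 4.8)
The plan is to lean directly on the observation recorded just before the statement: $\m{A}$ is a $\K$-epic subalgebra of $\m{B}$ precisely when $\m{A}$ is a subalgebra of $\m{B}$ and the inclusion homomorphism $\iota\colon\m{A}\to\m{B}$ is an epimorphism in $\K$. Because $\K$ is closed under subalgebras, every subalgebra of a member of $\K$ again lies in $\K$, so these inclusions are genuine morphisms between members of $\K$ and the notion of epimorphism in $\K$ applies to them. This closure hypothesis is what makes both sides of the equivalence well-posed, and keeping it in view is the one point requiring attention.

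For the forward direction I would argue contrapositively. Suppose some $\m{B}\in\K$ has a \emph{proper} $\K$-epic subalgebra $\m{A}$. Then $\m{A}\in\K$, and by the observation the inclusion $\iota\colon\m{A}\to\m{B}$ is an epimorphism in $\K$; but $\iota$ is not surjective, since $A$ is a proper subset of $B$. Hence $\K$ possesses a non-surjective epimorphism and therefore fails to have surjective epimorphisms.

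For the converse I would use the image factorization of an arbitrary epimorphism. Let $\f\colon\m{A}\to\m{B}$ be an epimorphism in $\K$ and put $\m{A}':=\f[\m{A}]$, a subalgebra of $\m{B}$, so that $\m{A}'\in\K$ by closure under subalgebras. The key step is to check that $\m{A}'$ is a $\K$-epic subalgebra of $\m{B}$: given any $\m{C}\in\K$ and homomorphisms $\ps_1,\ps_2\colon\m{B}\to\m{C}$ that agree on $A'$, every element $\f(a)$ lies in $A'$, so $\ps_1\f=\ps_2\f$, and since $\f$ is an epimorphism in $\K$ this yields $\ps_1=\ps_2$. By hypothesis no member of $\K$ has a proper $\K$-epic subalgebra, so $\m{A}'$ cannot be a proper subalgebra, that is, $A'=B$ and $\f$ is surjective. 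As $\f$ was an arbitrary epimorphism, $\K$ has surjective epimorphisms.

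The argument is essentially routine, and I expect no serious obstacle: the only delicate point is the systematic appeal to closure under subalgebras to place $\m{A}$ (respectively $\m{A}'=\f[\m{A}]$) inside $\K$, which is exactly what legitimizes speaking of ``epimorphism in $\K$'' and of ``$\K$-epic subalgebra'' in each half of the proof.
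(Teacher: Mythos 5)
Your proposal is correct and follows essentially the same argument as the paper: both directions reduce to the observation that the image $\f[\m{A}]$ of an epimorphism is a $\K$-epic subalgebra and, conversely, that the inclusion of a proper $\K$-epic subalgebra is a non-surjective epimorphism, with closure under subalgebras invoked at the same points. The only difference is cosmetic (contrapositive versus direct phrasing of each implication).
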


\begin{proof}
Let $\m{A},\m{B}\in\K$ and suppose that $\f\colon\m{A}\to\m{B}$ is a non-surjective $\K$-epimorphism. Then $\f[\m{A}]$ is a proper subalgebra of $\m{B}$. Observe that if $\m{C}\in\K$ and $\ps_1,\ps_2\colon\m{B}\to\m{C}$ are homomorphisms that coincide on their restriction to $\f[A]$, then $\ps_1\f=\ps_2\f$ and hence $\ps_1=\ps_2$ since $\f$ is an epimorphism. It follows that $\f[\m{A}]$ is a proper $\K$-epic subalgebra of $\m{B}$.

For the converse, let $\m{B}\in\K$ and suppose that $\m{A}$ is a proper $\K$-epic subalgebra of $\m{B}$. Then $\m{A}\in\K$ since $\K$ is closed under subalgebras. Hence the inclusion map of $\m{A}$ into $\m{B}$ is a non-surjective $\K$-epimorphism between members of $\K$.
\end{proof}

The following theorem strengthens a result from~\cite{FusGal2022}.

\begin{Theorem}\label{t:mainsa}
Let $\V$ be an arithmetical variety with the congruence extension property such that $\Vfsi$ is a universal class. If every doubly injective span in $\Vfsi$ has a strong amalgam in $\V$, then $\V$ has the strong amalgamation property.
\end{Theorem}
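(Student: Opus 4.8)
The plan is to combine Theorem~\ref{t:isbell} with the transfer machinery already developed, showing that under these hypotheses $\V$ has both the \prp{AP} and \prp{SE}, hence the \prp{SAP}. The hypotheses give me an arithmetical (so congruence-distributive) variety with the \prp{CEP} and $\Vfsi$ a universal class. Since $\Vfsi$ is universal, it is closed under subalgebras, so Corollary~\ref{c:APvar} and the \prp{SE}-transfer results of Section~\ref{s:further} are available. By Theorem~\ref{t:isbell}, it suffices to establish that $\V$ has the \prp{AP} and \prp{SE} separately.

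For the \prp{AP}: the hypothesis that every doubly injective span in $\Vfsi$ has a strong amalgam in $\V$ yields, in particular, an (ordinary) amalgam in $\V$ for every such span. This is exactly condition~(5) of Corollary~\ref{c:APvar} (strengthened only by strongness, which I discard), and since $\V$ is congruence-distributive with the \prp{CEP} and $\Vfsi$ is closed under subalgebras, Corollary~\ref{c:APvar} gives that $\V$ has the \prp{AP}. For \prp{SE}: by Theorem~\ref{t:campercholi}, since $\V$ is arithmetical and $\Vfsi$ is universal, $\V$ has \prp{SE} if and only if $\Vfsi$ has \prp{SE}. So the whole problem reduces to proving that $\Vfsi$ has \prp{SE}, and here I would invoke Lemma~\ref{l.epic}: $\Vfsi$ has \prp{SE} if and only if no member of $\Vfsi$ has a proper $\Vfsi$-epic subalgebra.

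The main obstacle is ruling out proper $\Vfsi$-epic subalgebras of members of $\Vfsi$ using the strong-amalgamation hypothesis. The key idea is the classical link between strong amalgamation and epimorphisms: if $\m{A}$ is a proper $\Vfsi$-epic subalgebra of some $\m{B}\in\Vfsi$, I would apply the strong-amalgamation hypothesis to the doubly injective span $\lan\m{A},\m{B},\m{B},\mathit{id},\mathit{id}\ran$, obtaining a strong amalgam $\lan\m{D},\ps_B,\ps_C\ran$ in $\V$ with $\ps_B[B]\cap\ps_C[B]=\ps_B[A]$. Strongness forces the two copies of $\m{B}$ to agree exactly on the image of $\m{A}$; but epicity of the inclusion $\m{A}\hookrightarrow\m{B}$ forces $\ps_B=\ps_C$ (as homomorphisms into $\m{D}\in\V$ that agree on $A$), so that $\ps_B[B]=\ps_C[B]$, whence $\ps_B[B]=\ps_B[A]$ and, by injectivity of $\ps_B$, $B=A$ — contradicting properness. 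One subtlety to handle carefully: the epimorphism condition in the definition of $\Vfsi$-epic subalgebra quantifies only over codomains in $\Vfsi$, whereas the strong amalgam $\m{D}$ lies merely in $\V$; I would resolve this by first using the \prp{AP}-transfer (Theorem~\ref{t:campercholi}'s reduction) to work with epimorphisms in $\V$ rather than $\Vfsi$, noting that since $\V$ is arithmetical with $\Vfsi$ universal, Theorem~\ref{t:campercholi} lets me transfer \prp{SE} between the two settings, and epicity in $\V$ is what pairs correctly with the codomain $\m{D}\in\V$.

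Assembling these pieces: the strong-amalgamation hypothesis gives \prp{AP} for $\V$ via Corollary~\ref{c:APvar}, and rules out proper epic subalgebras (hence gives \prp{SE} via Lemma~\ref{l.epic} and Theorem~\ref{t:campercholi}); then Theorem~\ref{t:isbell} combines \prp{AP} and \prp{SE} into the \prp{SAP} for $\V$. The delicate point to get right in a full write-up is the exact interplay between epimorphisms in $\Vfsi$ versus in $\V$ and ensuring the strong amalgam's defining equation $\ps_B[B]\cap\ps_C[B]=\ps_B[A]$ is leveraged correctly against epicity; everything else is a routine chaining of the already-established transfer theorems.
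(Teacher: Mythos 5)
Your overall route is the same as the paper's: deduce the \prp{AP} from Corollary~\ref{c:APvar}, reduce \prp{SAP} to \prp{SE} via Theorem~\ref{t:isbell}, transfer \prp{SE} to $\Vfsi$ via Theorem~\ref{t:campercholi}, and then use Lemma~\ref{l.epic} together with a strong amalgam of the span $\lan\m{A},\m{B},\m{B},\iota,\iota\ran$ to rule out proper $\Vfsi$-epic subalgebras. The computation with strongness is also right: for $b\in B\setminus A$, the equality $\ps_B[B]\cap\ps_C[B]=\ps_B[A]$ together with injectivity forces $\ps_B(b)\neq\ps_C(b)$.

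However, there is a genuine gap at exactly the point you flagged, and your proposed repair does not close it. You need to contradict \emph{$\Vfsi$-epicity} of the inclusion $\iota\colon\m{A}\to\m{B}$, which requires producing two distinct homomorphisms from $\m{B}$ into a member of $\Vfsi$ that agree on $A$; the maps $\ps_B,\ps_C$ you have constructed land in $\m{D}\in\V$, which need not be finitely subdirectly irreducible, so they only witness failure of $\V$-epicity. Appealing to Theorem~\ref{t:campercholi} cannot fix this: that theorem is a global equivalence between the \prp{SE} property for $\V$ and for $\Vfsi$, not a statement that an individual inclusion is a $\V$-epimorphism if and only if it is a $\Vfsi$-epimorphism, and the implication you would need runs the wrong way in any case, since $\Vfsi$-epicity (quantifying over fewer codomains) is the weaker condition and so is not refuted by exhibiting a failure of $\V$-epicity. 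The missing step, which is what the paper supplies, is elementary but essential: take a subdirect representation $\x\colon\m{D}\to\prod_{i\in I}\m{D}_i$ with each $\m{D}_i\in\Vfsi$; since $\ps_B(b)\neq\ps_C(b)$ for some $b$, there is a coordinate $i$ with $\pi_i\x\ps_B\neq\pi_i\x\ps_C$, and these are two distinct homomorphisms $\m{B}\to\m{D}_i\in\Vfsi$ agreeing on $A$, so $\m{A}$ is not a $\Vfsi$-epic subalgebra of $\m{B}$. With that one projection argument inserted, your proof is complete and coincides with the paper's.
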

\begin{proof}
Suppose that every doubly injective span in $\Vfsi$ has a strong amalgam in $\V$. Then $\V$ has the \prp{AP}, by Corollary~\ref{c:APvar}. Hence, by Theorems~\ref{t:isbell} and~\ref{t:campercholi}, it is enough to show that $\Vfsi$ has \prp{SE}, or, equivalently, by Lemma~\ref{l.epic}, that no member of $\Vfsi$ has a proper $\Vfsi$-epic subalgebra. Let $\m{B}\in\Vfsi$, let $\m{A}$ be a proper subalgebra of $\m{B}$, and let $\iota\colon\m{A}\to\m{B}$ be the inclusion map. By assumption, the span $(\m{A},\m{B},\m{B},\iota,\iota)$ has a strong amalgam $(\m{D},\ps_1,\ps_2)$ in $\V$. Let $\x\colon \m{D}\to\prod_{i\in I} \m{D}_i$ be a subdirect representation of $\m{D}$, so that in particular $\m{D}_i\in\Vfsi$ for each $i\in I$. Since $\m{D}$ is a strong amalgam, there exists an $i\in I$ such that $\x\ps_1(i)\neq\x\ps_2(i)$. On the other hand, since $\m{D}$ is an amalgam, it follows that $\x\ps_1\iota = \x\ps_2\iota$. This implies that $\iota$ is not a $\K$-epimorphism, proving the theorem.
\end{proof}


\section{Decidability}\label{s:decidability}

To fix terminology, let us call a variety $\V$ {\em finitely generated} if it is generated as a variety by a given finite set of finite algebras of finite signature, and {\em residually small} if there exists a bound on the size of its subdirectly irreducible members. It is known that if a residually small congruence-distributive variety $\V$ has the \prp{AP}, then it has the \prp{CEP}~\cite{Kea89}*{Corollary~2.11}.

Consider any finitely generated congruence-distributive variety $\V$ such that $\Vfsi$ is closed under subalgebras. By J{\'o}nsson's Lemma~\cite{Jon67}, there exists and can be constructed a finite set $\Vfsi^*\subseteq\Vfsi$ of finite algebras such that each $\m{A}\in\Vfsi$ is isomorphic to some $\m{A}^*\in\Vfsi^*$. Hence, by Corollary~\ref{c:CEPvar}, it can be decided if $\V$ has the \prp{CEP} by checking if each member of $\Vfsi^*$ has the \prp{CEP}. Since $\V$ is clearly residually small, if $\V$ does not have the \prp{CEP}, it cannot have the \prp{AP}. Otherwise, $\V$ has the \prp{CEP} and, by Corollary~\ref{c:APvar}, it can be decided if $\V$ has the \prp{AP} by checking --- by considering the finitely many finite algebras in $\Vfsi^*$ --- if $\Vfsi$ has the \prp{1AP}. Finally, if $\V$ is also arithmetical, to check if $\V$ has \prp{SE}, it suffices to check --- again, considering the algebras in $\Vfsi^*$ --- if $\Vfsi$ has  \prp{SE}~\cite{Cam18}*{Theorem~22}, and $\V$ then has the \prp{SAP} if and only if it has \prp{SE} and the \prp{AP}~\cite{Isb1966}. Hence we have established the following result.

\begin{Theorem}\label{t:decidability}
Let $\V$ be a finitely generated congruence-distributive variety such that $\Vfsi$ is closed under subalgebras. There exist effective algorithms to decide if $\V$ has the congruence extension property, amalgamation property, or transferable injections property. If $\V$ is also arithmetical, then there exist effective algorithms to decide if $\V$ has surjective epimorphisms or the strong amalgamation property.
\end{Theorem}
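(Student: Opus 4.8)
The plan is to assemble Theorem~\ref{t:decidability} entirely from the machinery already developed, reducing every property to a finite check on the constructible finite set $\Vfsi^* \subseteq \Vfsi$. First I would invoke J{\'o}nsson's Lemma~\cite{Jon67}: since $\V$ is finitely generated by finite algebras of finite signature, $\Vsi$ is contained in $\mathbf{HS}$ of (products of) the generators, so there are --- up to isomorphism --- only finitely many finite subdirectly irreducible algebras, each effectively constructible. Because $\Vfsi$ is assumed closed under subalgebras and every member of $\Vfsi$ embeds into an ultraproduct of members of $\Vsi$ (Relativized J{\'o}nsson Lemma~\cite{CD90}*{Lemma~1.5}), I would argue that $\Vfsi$ itself, up to isomorphism, consists of finitely many finite algebras, yielding the desired finite representative set $\Vfsi^*$. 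This finiteness is the backbone of every decidability claim.

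For the \prp{CEP}, I would apply Corollary~\ref{c:CEPvar}: $\V$ has the \prp{CEP} if and only if $\Vfsi$ does, and the latter is decided by checking each of the finitely many algebras in $\Vfsi^*$ for the \prp{CEP} --- a finite computation since each algebra is finite and each congruence and its generated extension can be enumerated. For the \prp{AP}, I would first record that $\V$ is \emph{residually small} (its subdirectly irreducibles are bounded in size), so by~\cite{Kea89}*{Corollary~2.11} the \prp{AP} forces the \prp{CEP}; thus if the \prp{CEP} check fails, $\V$ lacks the \prp{AP}. Otherwise the \prp{CEP} holds and Corollary~\ref{c:APvar} reduces the \prp{AP} to checking whether $\Vfsi$ has the \prp{1AP}, which --- since doubly injective spans among the finitely many finite algebras in $\Vfsi^*$ are finitely many and each candidate homomorphism/embedding into a finite target is checkable --- is again decidable. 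The \prp{TIP} is then handled by Theorem~\ref{t:mainTIP}, or equivalently by combining the \prp{CEP} and \prp{AP} decisions via Proposition~\ref{p:tipapcep}.

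For the arithmetical case, I would decide \prp{SE} by Theorem~\ref{t:campercholi} (noting $\Vfsi$ is a universal class here, being finitely axiomatizable over a finite set of finite algebras), reducing \prp{SE} for $\V$ to \prp{SE} for $\Vfsi$; by Lemma~\ref{l.epic} this amounts to checking, for each $\m{B}\in\Vfsi^*$ and each proper subalgebra $\m{A}$, whether $\m{A}$ is a proper $\Vfsi$-epic subalgebra --- a finite test via the finitely many homomorphisms into the finite members of $\Vfsi^*$. Finally, the \prp{SAP} is decided by Theorem~\ref{t:isbell}: $\V$ has the \prp{SAP} if and only if it has both the \prp{AP} and \prp{SE}, both of which have already been decided. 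I expect the main obstacle to be making the finiteness and effectiveness genuinely rigorous --- namely verifying that $\Vfsi^*$ can be \emph{constructed}, not merely shown finite, and that each property reduces to a genuinely terminating search over finite data (finitely many algebras, congruences, spans, and homomorphisms). The subtlety lies in confirming that deciding the \prp{1AP} and \prp{SE} only requires examining amalgams and epic relations \emph{within} the finite class $\Vfsi^*$, which is precisely what Corollary~\ref{c:APvar} and Lemma~\ref{l.epic} (together with the universal-class structure) guarantee, so that no unbounded search over arbitrary members of $\V$ is needed.
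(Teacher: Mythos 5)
Your proposal is correct and follows essentially the same route as the paper: J\'onsson's Lemma to obtain the constructible finite representative set $\Vfsi^*$, Corollary~\ref{c:CEPvar} for the \prp{CEP}, residual smallness plus~\cite{Kea89} and Corollary~\ref{c:APvar} for the \prp{AP}, Theorem~\ref{t:mainTIP} for the \prp{TIP}, and Theorems~\ref{t:campercholi} and~\ref{t:isbell} for \prp{SE} and the \prp{SAP}. Your added remarks on why $\Vfsi$ is a universal class in the arithmetical case and on the effectiveness of each finite check are details the paper leaves implicit, but they do not change the argument.
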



\section{A Case Study: Varieties of BL-Algebras}\label{s:BLalgebras}

BL-algebras, introduced by H{\'a}jek in~\cite{haj98} as an algebraic semantics for his basic fuzzy logic of continuous t-norms, have been studied intensively over the past twenty five years, largely in the framework of substructural logics and residuated lattices (see, e.g.,~\cites{AgMo03,Mon06,JM10,MMT14,AB21,FZ21,Fus22}). In this section, we use the tools developed in previous sections to contribute to the development of a (still incomplete) description of the varieties of BL-algebras that have the~\prp{AP}.

A \emph{BL-algebra} is an algebra $\m A=\lan A,\mt,\jn,\cdot,\to,0,1\ran$ satisfying
\begin{enumerate}[label=(\roman*)]
\item	$\lan A,\mt,\jn,0,1 \ran$ is a bounded lattice with order $a\le b :\Longleftrightarrow a\mt b=a$;
\item	$\lan A,\cdot,1 \ran$ is a commutative monoid;
\item $\to$ is the residual of $\cdot$, i.e., $a \cdot b \le c\iff b \le a \to c$\, for all $a,b,c\in A$;
\item $a \mt b = a \cdot (a\to b)$\, and\, $(a\to b)\jn (b\to a)=1$\, for all $a,b\in A$.
\end{enumerate}
The class of BL-algebras forms a congruence-distributive variety $\cls{BL}$ with the \prp{CEP}, and $\cls{BL}_{_\text{FSI}}$ is the positive universal class consisting of all totally ordered BL-algebras (i.e., such that $\lan A,\le\ran$ in the preceding definition is a chain). BL-algebras hence form a subvariety of the variety of semilinear integral bounded residuated lattices (also known as MTL-algebras)~\cites{est:mtl,FU2019}.

Notable subvarieties of $\cls{BL}$ include (i) the variety $\cls{MV}$ of {\em MV-algebras} consisting of BL-algebras satisfying $a\jn b = (a\to b)\to b$ for all $a,b\in A$; (ii) the variety $\cls{G}$ of {\em G{\"o}del algebras} consisting of BL-algebras satisfying $a\mt b= a\cdot b$ for all $a,b\in A$; (iii) the variety $\cls{P}$ of {\em product algebras} consisting of BL-algebras satisfying $a\mt(a\to 0)=0$ and $(a\to 0)\jn ((a\to(a\cdot b))\to b)=1$ for all $a,b\in A$. The varieties $\cls{MV}$, $\cls{G}$, and $\cls{P}$ are generated by algebras $\mathbfL$, $\m{G}$, and $\m{P}$ of the form $\lan [0,1],\min,\max,\star,\to_\star,0,1\ran$, where $x\star y$ is $\max(0,x+y-1)$, $\min(x,y)$, and $xy$, respectively. Every member of $\cls{BL}_{_\text{FSI}}$ (i.e., every totally ordered BL-algebra) can be constructed as a certain ordinal sum of members of $\cls{MV}_{_\text{FSI}}$ and $\cls{P}_{_\text{FSI}}$~\cite{AgMo03}*{Theorem~3.7}. 

Let us briefly recall some further relevant facts about $\cls{MV}$, $\cls{G}$, and $\cls{P}$. First, given any totally ordered Abelian group $\m{L}=\lan L,+,-,0,\le\ran$ and $u\in L$ with $u\ge 0$, defining $a\cdot b:= \max(a+b-u,0)$ and $a\to b:=\min(u-a+b,u)$ yields a totally ordered MV-algebra $\Ga(\m{L},u):=\lan [0,u],\min,\max,\cdot,\to,0,u\ran$. Each proper non-trivial subvariety of $\cls{MV}$ is generated by a non-empty finite set of algebras of the form $\m{S}_n:=\Ga(\m{Z},n)$ or $\m{S}_n^\omega:=\Ga(\m{Z}\times_{_\text{lex}}\m{Z},\lan n,0\ran)$, where $\m{Z}$ is the ordered group of integers, $\m{Z}\times_{_\text{lex}}\m{Z}$ is the lexicographic product of two copies of $\m{Z}$, and $n\in\N^{>0}$~\cite{Kom81}*{Theorem 4.11}. Notably, $\m{S}_1$ generates the variety $\cls{BA}$ of Boolean algebras, and  $\m{S}^\omega_1$, known as the Chang algebra, generates a variety denoted by $\cls{C}$.  Each proper non-trivial subvariety $\cls{G}_n$ of $\cls{G}$ is generated by the algebra $\m{G}_n:=\lan \{0,\frac{1}{n},\dots,\frac{n-1}{n},1\},\min,\max,\min,\to,0,1\ran$, where $n\in\N^{>0}$ and $a\to b=1$ if $a\le b$, otherwise $a\to b=b$. Finally, the only proper non-trivial subvariety of $\cls{P}$ is $\cls{BA}$ (which coincides with $\cls{G}_2$).

The non-trivial subvarieties of $\cls{MV}$, $\cls{G}$, and $\cls{P}$ that have the \prp{AP} are precisely the varieties generated by $\m{S}_n$ ($n\in\N^{>0}$), $\m{S}^\omega_n$ ($n\in\N^{>0}$), $\cls{MV}$, $\cls{G}$, $\cls{G}_3$, and $\cls{P}$~\cite{DiNLe00}*{Theorem~13}. Note also that $\cls{BL}$ has the \prp{AP}~\cite{Mon06}*{Theorem~3.7}, and~\cite{AB21}*{Theorem~6} gives a complete description of the varieties of BL-algebras having the~\prp{AP} that are generated by a totally ordered BL-algebra built as an ordinal sum of finitely many members of $\cls{MV}_{_\text{FSI}}$ and $\cls{P}_{_\text{FSI}}$. Here, we consider subvarieties of  $\cls{BL}_1 := \cls{MV}\jn\cls{G}\jn\cls{P}$, each of which is generated by a class of ``one-component'' totally ordered BL-algebras, i.e., members of $\cls{MV}_{_\text{FSI}}$,  $\cls{G}_{_\text{FSI}}$, and  $\cls{P}_{_\text{FSI}}$. 

\begin{Lemma}\label{l:bl1fail}
Let $\V$ be a subvariety of $\cls{BL}_1$ satisfying $\V\not\subseteq\cls{MV}$ and $\m{S}_n\in\V$ for some $n\in\N^{>1}$. Then $\V$ does not have the amalgamation property.
\end{Lemma}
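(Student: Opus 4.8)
The plan is to exhibit an explicit doubly injective span in $\V$ that admits no amalgam, invoking Theorem~\ref{t:APmain} (or rather its contrapositive). Since $\cls{BL}_1$ is congruence-distributive with the \prp{CEP}, and $(\cls{BL}_1)_{_\text{FSI}}$ is closed under subalgebras, any subvariety $\V$ inherits these properties, so Theorem~\ref{t:APmain} applies: $\V$ fails to have the \prp{AP} precisely when $\Vfsi$ fails to have the \prp{1AP}. Thus it suffices to produce a doubly injective span $\lan\m{A},\m{B},\m{C},\f_B,\f_C\ran$ in $\Vfsi$ that cannot be completed to a commuting diagram with $\ps_B$ a homomorphism and $\ps_C$ an embedding into any $\m{D}\in\Vfsi$.

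First I would unpack the hypotheses. The assumption $\m{S}_n\in\V$ for some $n>1$ means $\V$ contains a nontrivial MV-chain, in particular a subalgebra isomorphic to $\m{S}_2$ (the three-element MV-chain $\{0,\tfrac12,1\}$), since $\m{S}_2$ embeds into $\m{S}_n$ for $n$ even and $\m{S}_1=\cls{BA}$-considerations handle the rest; more carefully, $\m{S}_n$ always contains the Boolean subalgebra $\{0,1\}$ and, being a nontrivial MV-chain, has a nontrivial negation. The assumption $\V\not\subseteq\cls{MV}$ means $\V$ contains some totally ordered BL-algebra that is \emph{not} an MV-algebra --- by the ordinal-sum decomposition and the structure of $\cls{BL}_1$, this forces $\V$ to contain a one-component chain from $\cls{G}_{_\text{FSI}}$ or $\cls{P}_{_\text{FSI}}$ witnessing non-idempotent-but-non-involutive behaviour, i.e.\ a chain in which some element $a<1$ satisfies $a\cdot a = a$ (a Gödel-type idempotent, as opposed to the MV condition where $\neg\neg a = a$ governs the structure). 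The key structural clash is between \emph{involutive} behaviour (negation $x\mapsto x\to 0$ is an involution, as in $\cls{MV}_{_\text{FSI}}$) and \emph{idempotent/Gödel} behaviour.

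The core of the argument is then to build the offending span around the two-element Boolean algebra $\m{A}=\m{2}=\{0,1\}$, which is a common subalgebra of every BL-chain, embedding it into two incompatible one-component chains: on one side into an MV-chain $\m{B}$ (e.g.\ $\m{S}_2$), where the element strictly between $0$ and $1$ is its own double negation and multiplication is the truncated sum, and on the other side into a Gödel chain $\m{C}=\m{G}_3$, where the intermediate element is idempotent. The obstruction to amalgamation is that any $\m{D}\in\Vfsi$ is a BL-chain, hence an ordinal sum of MV- and product-components, and the image of $\m{2}$ lands in a single ``slot''; the images of the intermediate elements of $\m{B}$ and $\m{C}$ cannot be reconciled, because $\ps_C$ must be an embedding (preserving the Gödel idempotent $c\cdot c = c$ with $0<c<1$) while $\ps_B$ must be a homomorphism respecting $b\cdot b = 0$ for the MV-element $b$ --- and in a totally ordered $\m{D}$, two elements lying in the same component between the common bounds cannot simultaneously be a nontrivial idempotent and square to the bottom unless one collapses, contradicting the embedding requirement for $\ps_C$.

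\textbf{The main obstacle} will be verifying that the span genuinely admits no amalgam in \emph{every} $\m{D}\in\Vfsi$, rather than merely in the obvious candidates: I must argue uniformly over all BL-chains in $\V$, exploiting the ordinal-sum description from~\cite{AgMo03}*{Theorem~3.7} to pin down where the shared Boolean skeleton and the two competing intermediate elements must sit. Concretely, I expect to show that a homomorphism $\ps_B$ out of an MV-chain into a BL-chain, together with an embedding $\ps_C$ of a Gödel chain, forces the two intermediate images into the same component with incompatible multiplicative behaviour, yielding the required contradiction and hence the failure of the \prp{1AP} for $\Vfsi$. A careful bookkeeping of which one-component generators actually lie in $\V$ (ruling out the degenerate cases where $\V$ is too small to contain both an MV-witness and a non-MV-witness) will be needed to make the single span work for the full range of $\V$ allowed by the hypotheses.
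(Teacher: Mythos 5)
Your high-level strategy is the paper's: apply Theorem~\ref{t:APmain} in contrapositive form and exhibit a doubly injective span in $\Vfsi$ over the two-element Boolean algebra, with a finite MV-chain on one leg and a non-MV chain on the other. However, the execution has two genuine gaps. First, you cannot fix $\m{B}=\m{S}_2$ and $\m{C}=\m{G}_3$: neither algebra need belong to $\V$. The variety generated by $\m{S}_n$ contains $\m{S}_m$ only for $m$ dividing $n$, so $\m{S}_2\notin\V$ when, say, only $\m{S}_3$ is present; and the hypothesis $\V\not\subseteq\cls{MV}$ only guarantees some $\m{C}\in\Vfsi\setminus\cls{MV}$, which may be a product chain, in which case $\m{G}_3\notin\V$ and there is no ``G\"odel idempotent'' $0<c<1$ with $c\cdot c=c$ for your clash argument to use. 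The span must be built from $\m{S}_n$ itself and from an \emph{arbitrary} $\m{C}\in\Vfsi$ with $\m{C}\notin\cls{MV}$, and the non-amalgamability argument must cover the product case as well as the G\"odel case.

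Second, and more seriously, your obstruction argument does not actually rule out the existence of the required maps. You write that the two intermediate elements ``cannot simultaneously be a nontrivial idempotent and square to the bottom unless one collapses, contradicting the embedding requirement for $\ps_C$'' --- but $\ps_C$ constrains only $\m{C}$; nothing you say prevents the mere homomorphism $\ps_B$ from collapsing the intermediate element of the MV-chain, which would dissolve the clash. The missing ingredient is the simplicity of finite MV-chains: any homomorphism $\ps_B\colon\m{S}_n\to\m{D}$ is either trivial (impossible, since $\m{D}$ is non-trivial) or injective. Injectivity is then excluded because $\Vfsi\subseteq\cls{MV}_{_\text{FSI}}\cup\cls{G}_{_\text{FSI}}\cup\cls{P}_{_\text{FSI}}$ (these are one-component chains, by J\'onsson's Lemma applied to the join $\cls{BL}_1=\cls{MV}\jn\cls{G}\jn\cls{P}$), so $\m{C}\notin\cls{MV}$ embedding into $\m{D}$ forces $\m{D}\in\cls{G}_{_\text{FSI}}\cup\cls{P}_{_\text{FSI}}$, and $\m{S}_n$ with $n>1$ has a nonzero nilpotent element, which no G\"odel or product chain admits. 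Your appeal to general ordinal sums and ``slots'' both overcomplicates this (the $\cls{BL}_1$ hypothesis is precisely what reduces $\m{D}$ to a single component) and leaves the collapse loophole open.
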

\begin{proof}
Consider a doubly injective span $\lan\m{A},\m{B},\m{C},\f_B,\f_C\ran$ in $\Vfsi$, where $\m{A}$ is the two-element Boolean algebra, $\m{B}$ is $\m{S}_n\in\V$ for some $n\in\N^{>1}$, and $\m{C}\not\in\cls{MV}$. Suppose that there exist a $\m{D}\in\Vfsi$ and an embedding $\ps_C$ of $\m{C}$ into $\m{D}$. Then also $\m{D}\not\in\cls{MV}$, so $\m{D}\in\cls{G}_{_\text{FSI}}\cup\cls{P}_{_\text{FSI}}$. Since $\m{B}$ is a finite totally ordered MV-algebra, it is simple. Hence any homomorphism $\ps_B\colon\m{B}\to\m{D}$ is either trivial (i.e., maps all elements of $\m{B}$ to one element in $\m{D}$), which is not possible, since $\m{D}$ is non-trivial, or injective, which is not possible, since $\m{B}$ does not embed into any totally ordered G{\"o}del algebra or product algebra. So $\Vfsi$ does not have the \prp{1AP} and it follows, by Theorem~\ref{t:APmain}, that $\V$ does not have the \prp{AP}.
\end{proof}

\begin{Theorem}\label{t:bltheorem}
In addition to the varieties of MV-algebras generated by one totally ordered MV-algebra, there are precisely ten non-trivial subvarieties of $\cls{BL}_1 $ that have the amalgamation property:  $\cls{G}$, $\cls{G}_3$, $\cls{P}$, $\cls{G}\jn\cls{P}$, $\cls{G}\jn\cls{C}$,  $\cls{G}_3\jn\cls{P}$,  $\cls{G}_3\jn\cls{C}$, $\cls{P}\jn\cls{C}$, $\cls{G}_3\jn\cls{P}\jn\cls{C}$, and $\cls{G}\jn\cls{P}\jn\cls{C}$.
\end{Theorem}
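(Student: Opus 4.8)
The plan is to reduce the \prp{AP} for each subvariety to the one-sided amalgamation property for its finitely subdirectly irreducible members and then to analyze this combinatorially. Since $\cls{BL}$ is congruence-distributive and has the \prp{CEP}, both properties are inherited by every subvariety $\V \subseteq \cls{BL}_1$; moreover $\Vfsi$ consists of totally ordered algebras and is therefore closed under subalgebras, so Corollary~\ref{c:APvar} applies to every such $\V$ and it suffices to determine for which $\V$ the class $\Vfsi$ has the \prp{1AP}. By J{\'o}nsson's Lemma and the Relativized J{\'o}nsson Lemma, $(\cls{BL}_1)_{_\text{FSI}} = \cls{MV}_{_\text{FSI}} \cup \cls{G}_{_\text{FSI}} \cup \cls{P}_{_\text{FSI}}$ and each subvariety decomposes as $\V = (\V \cap \cls{MV}) \jn (\V \cap \cls{G}) \jn (\V \cap \cls{P})$; I would first record this together with the classifications of the three factor lattices recalled above, and the key structural observation that two totally ordered members of distinct classes among $\cls{MV}$, $\cls{G}$, $\cls{P}$ share only Boolean subalgebras. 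This follows from the incompatible behaviour of the monoid operation: members of $\cls{G}_{_\text{FSI}}$ with $\ge 3$ elements have proper idempotents, members of $\cls{MV}_{_\text{FSI}}$ with $\ge 3$ elements have nonzero nilpotents (any $a \notin \{0,1\}$ yields a nonzero $b$ with $b \le \neg b$ and hence $b^2 = 0$), and nontrivial members of $\cls{P}_{_\text{FSI}}$ are cancellative above $0$ and satisfy $\neg a = 0$ for $a > 0$, so none of these embeds into a chain of either other type.

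For the necessity direction I would rule out all subvarieties not on the list in three strokes. First, a purely MV subvariety has the \prp{AP} if and only if it is generated by a single chain, which is exactly the family set aside in the statement. Second, if $\V \not\subseteq \cls{MV}$ and $\m{S}_n \in \V$ for some $n > 1$, then $\V$ fails the \prp{AP} by Lemma~\ref{l:bl1fail}; since the only MV subvarieties containing no such $\m{S}_n$ are the trivial variety, the variety $\cls{BA}$ generated by $\m{S}_1$, and the variety $\cls{C}$ generated by $\m{S}_1^\omega$ (as $\m{S}_n$ is a homomorphic image of $\m{S}_n^\omega$), the MV factor of any further \prp{AP} subvariety must be one of these. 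Third, I would show that if the G\"odel factor is $\cls{G}_n$ with $4 \le n < \infty$, then $\Vfsi$ fails the \prp{1AP}: taking the span with apex $\m{G}_{n-1}$, with $\m C$ obtained by inserting one element just below the top and $\m B$ by inserting one element just above the bottom, any embedding of $\m C$ forces proper idempotents in the target, which must therefore be a G\"odel chain of at most $n$ elements; since G\"odel homomorphisms collapse only top filters, the extra bottom element of $\m B$ can be neither collapsed nor embedded without producing an $(n+1)$-element chain, a contradiction. Combined with the factor classifications, this leaves exactly the ten joins built from factors in $\{\cls{G}_3, \cls{G}\}$, $\{\cls{P}\}$, and $\{\cls{C}\}$.

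For the sufficiency direction I would prove that each of the ten varieties has the \prp{AP} by repeated application of Proposition~\ref{p:joinsap}. The components $\cls{G}$, $\cls{G}_3$, $\cls{P}$, and $\cls{C}$ each have the \prp{AP} and \prp{CEP}, and their finitely subdirectly irreducible classes are closed under subalgebras, so the only nontrivial hypothesis to verify is the retract condition. By the structural observation above, the common finitely subdirectly irreducible members of any two of these components are Boolean, so it is enough to check that the two-element Boolean algebra $\m{2}$ is a retract of every G\"odel, product, and $\cls{C}$-chain. For G\"odel and product chains this is witnessed by the congruence collapsing the filter of all positive elements to the top; for chains in $\cls{C}$ it is witnessed by the quotient by the radical, whose value is $\m{2}$ and which is the identity on the Boolean skeleton. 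Pairwise joins then have the \prp{AP}, and the three-element joins $\cls{G}\jn\cls{P}\jn\cls{C}$ and $\cls{G}_3\jn\cls{P}\jn\cls{C}$ follow by applying Proposition~\ref{p:joinsap} once more to $(\cls{G}\jn\cls{P})\jn\cls{C}$ and $(\cls{G}_3\jn\cls{P})\jn\cls{C}$, whose hypotheses (the \prp{AP} of the intermediate join, closure of its finitely subdirectly irreducible members under subalgebras, and the retract condition against $\m{2}$) are inherited from the pairwise case.

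The main obstacle I anticipate is the G\"odel step in the necessity argument: one must design the failing span so that the \emph{one-sided} amalgam, not merely the two-sided one, is obstructed, which hinges on the asymmetry between the embedded factor $\m C$ and the homomorphic factor $\m B$ together with the fact that G\"odel quotients collapse only upward, and one must argue that enlarging $\V$ by product or $\cls{C}$-chains cannot supply a smaller amalgam, since the embedded G\"odel chain forces the target to remain G\"odel. On the sufficiency side, the corresponding technical point is to verify that $\m{2}$ is genuinely a retract, and not merely a homomorphic image, of every relevant chain, including the infinite members of $\cls{C}$.
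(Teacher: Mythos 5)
Your proposal is correct and follows essentially the same route as the paper: reduce via Corollary~\ref{c:APvar} to the \prp{1AP} for $\Vfsi$, exclude mixed varieties whose MV factor contains some $\m{S}_n$ ($n>1$) via Lemma~\ref{l:bl1fail}, exclude the finite G\"odel factors $\cls{G}_n$ with $n\ge 4$, and obtain the ten joins from Proposition~\ref{p:joinsap} together with the fact that $\m{G}_2$ is a retract of every non-trivial chain in $\cls{G}_{_\text{FSI}}\cup\cls{P}_{_\text{FSI}}\cup\cls{C}_{_\text{FSI}}$. The only substantive deviation is that where the paper simply cites \cite{AB21}*{Theorem~3.3} to rule out varieties containing $\m{G}_n$ for some $n>3$ but not $\m{G}$, you supply a self-contained failing span, and your argument there is sound: the embedded $n$-element G\"odel chain forces any target in $\Vfsi$ to be an $n$-element G\"odel chain (proper idempotents exclude MV and product chains), and since the kernel of a homomorphism between G\"odel chains collapses only an upper filter, the extra atom of $\m{B}$ has nowhere to go.
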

\begin{proof}
Let $\V$ be any non-trivial subvariety of $\cls{BL}_1$. Then $\Vfsi$ consists of members of $\cls{G}_{_\text{FSI}}$, $\cls{MV}_{_\text{FSI}}$, and $\cls{P}_{_\text{FSI}}$. The cases where $\Vfsi$ is included in one of these classes are clear from the previous remarks. Moreover, it follows from~\cite{AB21}*{Theorem~3.3} that if $\V$ contains $\m{G}_n$  for some $n>3$ but not $\m{G}$, then  $\V$ does not have the \prp{AP}. Suppose now that $\V\not\subseteq\cls{MV}$ and $\V\cap\cls{MV}$ is non-trivial. Then $\V\cap\cls{MV}$ is either $\cls{MV}$ or generated by a non-empty finite set of algebras of the form $\m{S}_n$ or $\m{S}_n^\omega$ ($n\in\N^{>0}$). If $\V\cap\cls{MV}\not\in\{\cls{G}_2,\cls{C}\}$, then, since $\m{S}_n$ is a subalgebra of $\m{S}_n^\omega$ for each $n\in\mathbb{N}$, it follows that $\m{S}_n\in\V$ for some $n\in\N^{>1}$, and hence, by Lemma~\ref{l:bl1fail}, that $\V$ does not have the \prp{AP}. 

It remains therefore to show that each of $\cls{G}\jn\cls{P}$, $\cls{G}\jn\cls{C}$,  $\cls{G}_3\jn\cls{P}$,  $\cls{G}_3\jn\cls{C}$, $\cls{P}\jn\cls{C}$, $\cls{G}_3\jn\cls{P}\jn\cls{C}$, and $\cls{G}\jn\cls{P}\jn\cls{C}$ has the \prp{AP}. Clearly, the two-element Boolean algebra $\m{G}_2$ is the only non-trivial algebra common to the finitely subdirectly irreducible members of the varieties in these joins. Hence, by Proposition~\ref{p:joinsap}, it suffices to observe that $\m{G}_2$ is a retract of any non-trivial member of $\cls{G}_{_\text{FSI}}$, $\cls{P}_{_\text{FSI}}$, and $\cls{C}_{_\text{FSI}}$. This follows directly from a general result of~\cite{CT02}*{Theorem~4.5} describing BL-algebras with a Boolean retract, but we may also define a suitable retraction explicitly.  Given any non-trivial $\m{A}\in\cls{G}_{_\text{FSI}}\cup\cls{P}_{_\text{FSI}}\cup\cls{C}_{_\text{FSI}}$, define $\f\colon A\to\{0,1\}$ by mapping $a\in A$ to $0$ if and only if $a^n=0$ for some $n\in\N$, where $a^0:=1$ and $a^{k+1}:=a^k\cdot a$ ($k\in\N$). It is straightforward to verify that $\f$ is a retraction from $\m{A}$ onto $\m{G}_2$.
\end{proof}

\section*{Acknowledgement}

This research was supported by Swiss National Science Foundation grant 200021$\_$165850.


\bibliographystyle{elsarticle-num-names}
\bibliography{bibliography} 

\end{document}